\newcommand\R{\mathbb{R}}
\newcommand\ep{\varepsilon}
\newcommand\dt{\,dt}
\newcommand\dtau{\,d\tau}
\newcommand\ds{\,ds}
\newcommand\dx{\,dx}
\newcommand\LL{{\mathcal L}}
\newtheorem{theorem}{Theorem}[section]
\newtheorem{proposition}[theorem]{Proposition}
\newtheorem{lemma}[theorem]{Lemma}
\newtheorem{corollary}[theorem]{Corollary}
\theoremstyle{definition}
\newtheorem{definition}[theorem]{Definition}
\newtheorem{assumption}[theorem]{Assumption}
\theoremstyle{remark}
\newtheorem{remark}[theorem]{Remark}
\title{An existence result for dissipative nonhomogeneous hyperbolic \\ equations via a minimization approach}
\author{Lorenzo Tentarelli$^\dagger$, Paolo Tilli$^\ddagger$
\\ \ \\
{\small $^\dagger$Dipartimento di Matematica} \\
{\small Sapienza Universit\`a di Roma } \\
{\small Piazzale Aldo Moro, 5, 00185 Roma, Italy} \\
{\small \texttt{tentarelli@mat.uniroma1.it}}\\ \ \\
{\small  $^\ddagger$Dipartimento di Scienze Matematiche ``G.L. Lagrange'' } \\
{\small Politecnico di Torino } \\
{\small Corso Duca degli Abruzzi, 24, 10129 Torino, Italy} \\
{\small \texttt{paolo.tilli@polito.it}}
}
\begin{document}

\maketitle

\begin{abstract}
We discuss a purely variational approach to the study of a wide class of second order nonhomogeneous dissipative hyperbolic PDEs. Precisely, we focus on the wave-like equations that present also a nonzero source term and a first-order-in-time linear term. The paper carries on the research program initiated in \cite{ST1}, and developed in \cite{ST2,TT}, on the De Giorgi approach to hyperbolic equations.
\end{abstract}

\noindent{\small AMS Subject Classification: 35L70, 35L71, 35L75, 35L76, 35L90, 35L15, 49J45.}
\smallskip

\noindent{\small Keywords: minimization, nonhomogeneous hyperbolic equations, dissipation, De Giorgi conjecture.}


\section{Introduction}
\label{sec-intro}

In this paper, we extend some recent recults on the \emph{De Giorgi approach} to nonhomogeneous hyperbolic PDEs (see \cite{TT}) to the dissipative case.

Precisely, we present a version of this method that allows to study equations having the formal structure:
\begin{equation}
 \label{eq-waved}
 w''(t,x)=-\nabla\mathcal{W}\big(w(t)\big)(x)-\nabla\mathcal{G}\big(w'(t)\big)(x)+f(t,x),\qquad(t,x)\in\R^+\times\R^n,
\end{equation}
with two prescribed initial conditions
\begin{equation}
 \label{eq-ic}
 w(0,x)=w_0(x),\quad\quad w'(0,x)=w_1(x),\qquad x\in\R^n,
\end{equation}
where $\nabla\mathcal{W}$ and $\nabla\mathcal{G}$ are the G\^ateaux derivatives of two given functionals $\mathcal{W}:\mathrm{W}\to[0,\infty)$ and $\mathcal{G}:\mathrm{G}\to[0,\infty)$ ($\mathrm{W}$ and $\mathrm{G}$ being some Banach spaces of functions in $\R^n$ -- typically Sobolev spaces).

The central idea of the De Giorgi (or variational, or minimization) approach is that of finding solutions of hyperbolic Cauchy problems as limits of the minimizers of a proper sequence of functionals of the Calculus of Variations. The original conjecture (see \cite{degiorgi1,degiorgi2} for the statement and \cite{serra,ST1,ST2,tentarelli} for some clarifications) concerned the defocusing NLW equation
\begin{equation}
 \label{eq-nlw}
 w''=\Delta w-|w|^{p-2}w\qquad(p\geq2),
\end{equation}
namely \eqref{eq-waved} with
\[
 \mathcal{W}=\int_{\R^n}\bigg(\frac{1}{2}|\nabla v|^2+\frac{1}{p}|v|^p\bigg)\dx,\qquad\mathcal{G}\equiv0\qquad\text{and}\qquad f\equiv0,
\]
and claimed that the minimizers $(w_\ep)$ of the functionals
\[
 F_{\ep}^h(w):=\int_0^{\infty}e^{-t/\ep}\,\left(\frac{\ep^2}{2}\int_{\R^n}|w''(t,x)|^2\dx+\mathcal{W}\big(w(t,\cdot\,)\big)\right)\dt,
\]
subject to the boundary conditions \eqref{eq-ic}, converge to a solution of the Cauchy problem \eqref{eq-nlw}-\eqref{eq-ic}.

This conjecture has been (essentially) proved in \cite{ST1} (see also \cite{stefanelli}) and then generalized in two consecutive steps: in \cite{ST2} is discussed an abstract version of the problem where \eqref{eq-nlw} is replaced by \eqref{eq-waved} for some general functionals $\mathcal{W}$ and $\mathcal{G}$, but still in the homogeneous case $f\equiv0$; in \cite{TT}, on the other hand, is discussed the case of a nonzero source term $f$, but without dissipative effects (i.e., $\mathcal{G}\equiv0$). In view of this, our paper presents a novely both with respect to \cite{ST2} and with respect to \cite{TT}, thus representing (in some sense) a conclusion of the research program initiated in \cite{ST1}.

It is clear that in the case of the complete equation \eqref{eq-waved},  $F_\ep^h$ is no longer the appropriate functional. To this aim we define
\begin{equation}
 \label{eq-fuc}
 F_{\ep}(w):=F_{\ep}^h(w)+F_{\ep}^d(w)-F_{\ep}^s(w),
\end{equation}
with
\[
 F_{\ep}^d(w):=\ep\int_0^{\infty}e^{-t/\ep}\,\mathcal{G}\big(w'(t,\cdot\,)\big)\dt
\]
(as in \cite{ST2}) and
\[
 F_{\ep}^s(w):=\int_0^{\infty}\int_{\R^n}e^{-t/\ep}\,f_{\ep}(t,x)w(t,x)\dx\dt
\]
(as in \cite{TT}), where $(f_{\ep})$ is a suitable sequence of approximations of $f$ (for details about the reason of the approximating sequence, we refer the reader to \cite{TT}). Such a choice has an immediate heuristic justification: if $w_\ep$ is a minimizer of $F_\ep$ subject to \eqref{eq-ic}, then the Euler-Lagrange equations of $F_\ep$ turn out to be
\begin{multline*}
 \ep^2\,w_{\ep}''''(t,x)-2\ep w_{\ep}'''(t,x)+w_{\ep}''(t,x)=-\nabla\mathcal{W}\big(w_{\ep}(t,\cdot\,)\big)(x)+f_{\ep}(t,x)+  \\[.3cm]
 -\nabla\mathcal{G}\big(w_{\ep}(t,\cdot\,)\big)(x)+\ep\big(\nabla\mathcal{G}\big(w_{\ep}(t,\cdot\,)\big)(x)\big)'.
\end{multline*}
Hence, the link with \eqref{eq-waved} is immediate: as $\ep\downarrow0$, supposing that $f_\ep\to f$ and $w_\ep\to w$, one formally obtains \eqref{eq-waved} (and \eqref{eq-ic}) in the limit.

A first comment is in order. The paper provides a purely variational method for proving existence of (weak) solutions to second order hyperbolic PDEs \emph{with dissipation}, that is, wave-like equations that display extra first-order-in-time terms. This represents a further credit of the De Giorg approach since (to the best of our knowledge) there is no direct variational method that applies to this type of equations (even though the existence of weak solutions for equations like \eqref{eq-waved} is not new in several concrete examples, as one can see for instance in \cite{cherrier,liero2,lions4,lions2,strauss,tao} and references therein). For the sake of completeness, we recall that in the original formulation of De Giorgi (\cite{degiorgi1}) there is no mention to the possibility of extending the conjecture to dissipative equations, while this idea has been first introduced in \cite{ST2}.

\begin{remark}
 Suitable variants of the variational approach of De Giorgi have been developed in the last years in order to study other evolutions equations. For the applications to parabolic equations we mention e.g. \cite{akagi,bogelein,melchionna}, while for the applications to ODE systems we mention \cite{liero} (and references therein). We also quote some new extensions to the Navier-Stokes equation due to \cite{ortiz}.
\end{remark}

The main results of the paper are stated in Theorem \ref{result}, which naturally extends the outcomes both of \cite{ST2} and of \cite{TT}, under the same assumptions on $\mathcal{W}$, $\mathcal{G}$ and $f$. In particular, if we let $f\equiv0$ in Theorem \ref{result}, then we obtain all the results of \cite{ST2}; whereas, setting $\mathcal{G}\equiv0$, we recover \cite[Theorem 2.3]{TT}. Some interesting examples of dissipative nonhomogeneous equations covered by Theorem \ref{result} are present in Section \ref{sec-examples}, which can be seen therefore as an appendix of this introduction.

\medskip
In \cite{ST1,ST2,TT}, the main ingredient to obtain the required a-priori estimates on the minimizers $w_\ep$ is the control (uniform in $\ep$) of the \emph{approximate energy}, i.e.
\begin{equation}
\label{defea}
 \mathcal{E}_{\ep}(t):=\frac{1}{2}\int_{\R^n}|w_{\ep}'(t,x)|^2\dx+\ep^{-2}\int_0^{\infty}s \,e^{-s/\ep} \,\mathcal{W}\bigl(w_{\ep}(t+s)\bigr)\ds,
\end{equation}
that is an approximation (see \cite{TT}) of the energy usually associated with the solutions of \eqref{eq-waved}, i.e.
\begin{equation}
\label{eq-acmec}
 \mathcal{E}(t):=\frac{1}{2}\int_{\R^n}|w'(t,x)|^2\dx+\mathcal{W}\bigl(w(t)\bigr)
\end{equation}
(which is formally preserved when $f\equiv0$ and $\mathcal{G}\equiv0$). The main feature of $\mathcal{E}_\ep$ is the \emph{acausality} (meant as in \cite{tao}). Indeed, the second term of \eqref{defea} depends on all the values of $w_\ep(\tau)$, for every $\tau\geq t$. The meaning and the implications of this property, especially in the case $f\not\equiv0$ and $\mathcal{G}\equiv0$, are extensively explained in \cite{TT}. Here we limit ourselves to recall that the key point that allows to establish the required causal estimates on $(w_\ep)$ is the detection of suitable approximations $f_\ep$ of the forcing term $f$ (while in the homogeneous cases discussed by \cite{ST1,ST2}, this is not necessary due to the monotonicity of $\mathcal{E}_\ep$).

The main difference in the discussion of the complete equation \eqref{eq-waved}, with both dissipation and a source term, is that $\mathcal{E}_\ep$ is no more the crucial quantity. It is necessary to define the modification of $\mathcal{E}_\ep$ given by
\begin{equation}
 \label{defead}
 \mathcal{E}_\ep^d(t):=\mathcal{E}_\ep(t)+\frac{1}{\ep^2}\int_0^t\bigg(\int_0^\infty \frac{e^{-\tau/\ep}\,(\ep+\tau)}{\ep^2}\,\mathcal{G}\big(\ep w_\ep'(\tau+s)\big)\dtau\bigg)\ds,
\end{equation}
which coincides with $\mathcal{E}_\ep$ at $t=0$, but takes into account the dissipative effects. It is clear that such a quantity is not an approximation of the energy $\mathcal{E}$. Nevertheless, if we consider the natural correction of the energy that includes dissipation, namely
\[
 \mathcal{E}^d(t):=\mathcal{E}(t)+2\int_0^t\mathcal{G}\big(w'(s)\big)\ds,
\]
then (exploiting \eqref{eq-defG} and arguing as in \cite{TT}) one formally obtains that $\mathcal{E}_\ep^d(t)\to\mathcal{E}^d(t)$, as $\ep\downarrow0$ (whenever $w_\ep\to w$). In addition, since in the dissipative case $\mathcal{E}^d$ is a formally preserved quantity (up to the action of the external source), namely
\begin{equation}
 \label{eq-meccons}
 \mathcal{E}^d(t)=\mathcal{E}^d(0)+\int_0^t\int_{\R^n}f(s,x)w'(s,x)\dx\ds,\qquad\forall t\geq0,
\end{equation}
then we see that, as in the non-dissipative case, the central question is the investigation of a suitable approximation of the energy-type quantity which is is supposed to be preserved along the flow.

We also point out that $\mathcal{E}_\ep^d$ (as well as $\mathcal{E}_\ep$ in the nondissipative case) does not display any a-priori monotonicity when $f\not\equiv0$. Consequently, the main point of the paper is showing how to adapt the techniques developed in \cite{TT} to establish suitable growth estimates on $\mathcal{E}_\ep^d$. For the sake of simplicity, in our proofs, we will discuss only the new aspects, referring to \cite{TT} (and \cite{ST2}) for those results which do not require significant modifications.

\medskip
Finally, we recall that (as in \cite{ST2,TT}) the full strength of Theorem \ref{result} is obtained under assumption \eqref{eq-Wass}, which forces \eqref{eq-waved} to be semilinear (even though of arbitrary order in space). In addition, we highlight that (as in \cite{ST2}) Theorem \ref{result} works in the assumption that the functional $\mathcal{G}$ be quadratic, thus implying that we can manage only hyperbolic equations with linear dissipative terms, albeit, again, without any prescription on the order in space (see Section \ref{sec-examples}).

\bigskip
\bigskip

\noindent\textbf{Remark on Notation.} If $g=g(t,x)$, we use $g(t)$, or equivalently $g(t,\cdot\,)$, to denote the function of $x$ obtained fixing $t$. We also write $g'$, $g''$ etc. to denote partial differentiation with respect to $t$, while differential operators like $\nabla$, $\Delta$ etc. are referred to the sole space variables. Concerning function spaces, we agree that $L^p=L^p(\R^n)$, $H^m=H^m(\R^n)$ etc., the domain $\R^n$ being understood. Finally, $\langle\cdot\,,\cdot\,\rangle$ denotes duality pairing (function spaces being clear from the context), while $(\cdot\,,\cdot\,)_H$ denotes the inner product in a Hilbert space $H$.

\bigskip
\bigskip
\noindent\textbf{Acknowledgements}

\medskip
\noindent L.T. acknowledges the support of MIUR through the FIR grant 2013 ``Condensed Matter in Mathematical Physics (Cond-Math)'' (code RBFR13WAET).


\section{Functional setting and main results}

The abstract equation \eqref{eq-waved} and the functional \eqref{eq-fuc} are defined in terms of the general functionals $\mathcal{W}$ and $\mathcal{G}$. As in \cite{ST2,TT}, they have to meet the following properties.

\begin{assumption}
 \label{ass-W}
 The functional $\mathcal{W}:L^2\to[0,\infty]$ is weakly lower semicontinuous and its domain, i.e. the set of functions
 \begin{equation}
  \label{eq-domW}
  \mathrm{W}:=\{v\in L^2:\mathcal{W}(v)<\infty\},
 \end{equation}
 is a Banach space such that $C_0^{\infty}\hookrightarrow\mathrm{W}\hookrightarrow L^2$ (with dense embeddings). Moreover, $\mathcal{W}$ is G\^ateaux differentiable on $\mathrm{W}$ and its derivative $\nabla\mathcal{W}:\mathrm{W}\to\mathrm{W}'$ satisfies
 \[
  \|\nabla\mathcal{W}(v)\|_{\mathrm{W}'}\leq C\big(1+\mathcal{W}(v)^{\theta}\big),\qquad\forall v\in\mathrm{W},
 \]
 for some suitable constants $C\geq0$ and $\theta\in(0,1)$.\qed
\end{assumption}

\begin{assumption}
 \label{ass-G}
 The functional $\mathcal{G}:L^2\to[0,\infty]$ is defined by
 \begin{equation}
  \label{eq-defG}
  \mathcal{G}(v)=\left\{
  \begin{array}{ll}
   \tfrac{1}{2}\,a(v,v) & \mbox{if }v\in\mathrm{G}\\[.2cm]
   +\infty & \mbox{if }v\in L^2\backslash\mathrm{G},
  \end{array}
  \right.
 \end{equation}
 where $a:\mathrm{G}\times\mathrm{G}\to\R$ is a symmetric, nonnegative, bounded and bilinear form on a Hilbert space $\mathrm{G}$ endowed with the norm
 \begin{equation}
  \label{eq-Gnor}
  \|v\|_{\mathrm{G}}^2:=\|v\|_{L^2}^2+2\mathcal{G}(v)
 \end{equation}
 and such that $C_0^{\infty}\hookrightarrow\mathrm{G}\hookrightarrow L^2$ (with dense embeddings).\qed

\end{assumption}

\begin{remark}
Assumption \ref{ass-W} is typical of Dirichlet-type functionals like $\mathcal{W}(v)=\Vert \nabla^k v\Vert_{L^p}^p$ (with $p>1$). We refer to Section \ref{sec-examples}
for some examples. Here we just recall that Assumption \ref{ass-W} is additively stable, i.e. if two functionals satisfy Assumption \ref{ass-W}, then so does their sum (for more details on this assumption, see \cite{ST2}).
\end{remark}

\begin{remark}
 Assumption \ref{ass-G} is additively stable, as well as Assumption \ref{ass-W}. In addition, due to its particular form, one can easily check that $\mathcal{G}$ is Fr\'echet differentiable and weakly lower semicontinuous (for further remarks, we refer again the reader to \cite{ST2}).
\end{remark}

Now, we can state the main result of the paper.

\begin{theorem}
 \label{result}
 Let $\mathcal{W},\,\mathcal{G}$ be functionals satisfying Assumptions \ref{ass-W} and \ref{ass-G} (respectively). Let also $w_0\in\mathrm{W}$, $w_1\in\mathrm{W}\cap\mathrm{G}$ and $f\in L_{loc}^2([0,\infty),L^2)$. Then, there exists a sequence $(f_\ep)$, converging to $f$ in $L_{loc}^2([0,\infty);L^2)$ such that:
 \begin{itemize}
  \item[\emph{(a)}] \emph{Minimizers}. For every $\ep\in(0,1)$, the functional $F_{\ep}$ defined by \eqref{eq-fuc} has a minimizers $w_{\ep}$ in the class of functions in $H_{loc}^2([0,\infty);L^2)$ satisfying \eqref{eq-ic}.
  \item[\emph{(b)}] \emph{Estimates}. For every $T>0$ and $\tau\geq0$, there exist two constants $C_T,\,C_{\tau,T}>0$, independent of $\ep$, such that
  \begin{equation}
  \label{eq-apruno}
  \sup_{t\in [0,T]}\int_{\R^n}\big(\,|w_{\ep}'(t,x)|^2+|w_{\ep}(s,x)|^2\,\big)\dx\leq C_{T},
 \end{equation}
 \begin{equation}
  \label{eq-aprdue}
  \int_{\tau}^{\tau+T}\mathcal{W}\big(w_{\ep}(t)\big)\dt\leq C_{\tau,T},\qquad\forall T>\ep,
 \end{equation}
 \begin{equation}
  \label{eq-aprdued}
  \int_0^T\mathcal{G}\big(w_{\ep}'(t)\big)\dt\leq C_T,
 \end{equation}
 and
 \begin{equation}
  \label{eq-aprtred}
  \|w_{\ep}''\|_{L^2([0,T];\mathrm{W}')+L^2([0,T];\mathrm{G}')}\leq C_T.
 \end{equation}
 \item[\emph{(c)}] \emph{Convergence}. Every sequence $w_{\ep_i}$ (with $\ep_i\downarrow0$) admits a subsequence which is convergent in the weak topology of $H_{loc}^1([0,\infty);L^2)$ to a function $w$ that satisfies \eqref{eq-ic} (where the latter is meant as an equality in $(\mathrm{W}\cap\mathrm{G})'$). In addition,
 \begin{equation}
  \label{eq-reg}
  w'\in L_{loc}^{\infty}([0,\infty);L^2),\qquad w''\in L_{loc}^2([0,\infty);\mathrm{W}')+L_{loc}^2([0,\infty);\mathrm{G}')
 \end{equation}
 and (up to subsequences)
 \begin{equation}
  \label{eq-regpd}
  w_{\ep_i}'\rightharpoonup w'\qquad\mbox{in}\quad L^2([0,T];\mathrm{G}).
 \end{equation}
 \item[\emph{(d)}] \emph{Energy inequality}. Letting
 \[
  \mathcal{E}^d(t):=\frac{1}{2}\int_{\R^n}|w'(t,x)|^2\dx+\mathcal{W}(w(t))+2\int_0^t\mathcal{G}\big(w'(s)\big)\ds,
 \]
 there holds
 \begin{equation}
  \label{eq-enineqd}
  \mathcal{E}^d(t)\leq\left(\sqrt{\mathcal{E}^d(0)}+\sqrt{\frac{t}{2}\int_0^t\int_{\R^n}|f(s,x)|^2\dx\ds\,}\right)^{2},\qquad\text{for a.e.}\quad t\geq0.
 \end{equation}
 \item[\emph{(e)}] \emph{Solution of \eqref{eq-waved}}. Assuming, furthermore, that for some real numbers $m>0$, $\lambda_k\geq0$ and $p_k>1$ the functional $\mathcal{W}$ takes the form
 \begin{equation}
  \label{eq-Wass}
  \mathcal{W}(v)=\frac{1}{2}\|v\|_{\dot{H}^m}^2+\sum_{0\leq k<m}\frac{\lambda_k}{p_k}\int_{\R^n}|\nabla^kv(x)|^{p_k}\dx,
 \end{equation}
 then the limit function $w$ satisfies
 \begin{align}
  \label{eq-wavedfweak}
  \int_0^{\infty}\int_{\R^n}w'(t,x)\varphi'(t,x)\dx\dt= &\, \int_0^{\infty}\left(\langle\nabla\mathcal{W}\big(w(t)\big),\varphi(t)\rangle+\langle\nabla\mathcal{G}\big(w'(t)\big),\varphi(t)\rangle\right)\dt+\nonumber\\[.2cm]
  & -\int_0^{\infty}\int_{R^n}f(t,x)\varphi(t,x)\dx\dt
 \end{align}
 for every $\varphi\in C_0^{\infty}(\R^+\times\R^n)$, namely is a \emph{variational} solution of \eqref{eq-waved}.
 \end{itemize}
\end{theorem}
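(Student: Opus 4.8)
The plan is to follow the architecture of the De Giorgi scheme as developed in \cite{ST2,TT}, treating in detail only the genuinely new object---the dissipative approximate energy $\mathcal{E}_\ep^d$ of \eqref{defead}---and invoking those two references for the steps that transfer unchanged. For the \emph{existence of minimizers} (a), I would run the direct method on $F_\ep$ over the affine class of maps in $H_{loc}^2([0,\infty);L^2)$ obeying \eqref{eq-ic}. The quadratic term $F_\ep^h$ supplies weighted coercivity in the $w''$ variable; $\mathcal{W}$ and $\mathcal{G}$ are weakly lower semicontinuous by Assumptions \ref{ass-W} and \ref{ass-G}; and, once $(f_\ep)$ is fixed, $F_\ep^s$ is weakly continuous on sublevel sets. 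The one delicate point, already resolved in \cite{TT}, is to choose $f_\ep$ (a mollified and truncated approximation of $f$) so that $F_\ep$ is bounded below and minimizing sequences remain in a fixed sublevel set; weak lower semicontinuity of $F_\ep^h+F_\ep^d$ then yields a minimizer $w_\ep$, which satisfies the Euler--Lagrange equation displayed in the Introduction.

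The \emph{a priori estimates} (b) are where the new work concentrates. The strategy is to differentiate $\mathcal{E}_\ep^d$ in $t$, substitute the Euler--Lagrange equation for $w_\ep''$ (and its higher derivatives), and exploit the exponential weight $e^{-s/\ep}$ through an integration by parts in the inner variable. In the non-dissipative homogeneous case of \cite{ST1,ST2} this computation renders $\mathcal{E}_\ep$ monotone; the correction term added in \eqref{defead} is designed precisely so that, along the dissipative flow, the extra contributions coming from $\nabla\mathcal{G}(w_\ep')$ are absorbed with the favourable sign, using that the form $a$ is nonnegative. What remains is a source contribution that, after the weighted integration by parts, reduces to $\int_{\R^n} f_\ep\,w_\ep'\,\dx$ up to terms that vanish as $\ep\downarrow0$; by Cauchy--Schwarz and the elementary bound $\tfrac12\|w_\ep'(t)\|_{L^2}^2\le\mathcal{E}_\ep(t)\le\mathcal{E}_\ep^d(t)$ (the correction in \eqref{defead} being nonnegative), this is controlled by $\|f_\ep(t)\|_{L^2}\sqrt{2\,\mathcal{E}_\ep^d(t)}$. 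From the resulting differential inequality one extracts, via a Gronwall-type argument, a uniform bound on $\sqrt{\mathcal{E}_\ep^d}$ and hence \eqref{eq-apruno}--\eqref{eq-aprdued}; the bound \eqref{eq-aprtred} on $w_\ep''$ then follows by testing the Euler--Lagrange equation and using Assumption \ref{ass-W} together with the boundedness of $a$.

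For \emph{convergence, energy inequality and the equation} (c)--(e), the estimates of (b) give, along a subsequence, weak convergence in $H_{loc}^1([0,\infty);L^2)$ and in $L^2([0,T];\mathrm{G})$, with $w''$ bounded in the dual sum space, and the initial conditions pass to the limit in $(\mathrm{W}\cap\mathrm{G})'$. For (d) I would integrate the differential inequality of (b) to obtain the $\ep$-level analogue of \eqref{eq-enineqd}, then pass to the limit using the weak lower semicontinuity of each term of $\mathcal{E}^d$ (for the dissipative term, via \eqref{eq-regpd} and the lower semicontinuity of $\mathcal{G}$), together with $\mathcal{E}_\ep^d(0)=\mathcal{E}_\ep(0)\to\mathcal{E}^d(0)$ and $f_\ep\to f$ in $L_{loc}^2$. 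Finally, for (e) one passes to the limit in the weak form of the Euler--Lagrange equation: the terms $\ep^2 w_\ep''''$ and $\ep w_\ep'''$ drop out, the linear dissipative term is handled by \eqref{eq-regpd} and the linearity of $\nabla\mathcal{G}$, and the semilinear structure \eqref{eq-Wass} is exactly what allows one to identify the weak limit of $\nabla\mathcal{W}(w_\ep)$ with $\nabla\mathcal{W}(w)$, yielding \eqref{eq-wavedfweak}.

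The crux---and the main obstacle---is the growth estimate of step (b). Because $\mathcal{E}_\ep^d$ is \emph{acausal}, its inner integrals depending on $w_\ep(\tau)$ for all $\tau\ge t$, and is no longer monotone once $f\not\equiv0$, a naive forward-in-time Gronwall argument is unavailable. The hard part is therefore to organize the differentiation of $\mathcal{E}_\ep^d$ so that (i) the dissipative correction cancels the unfavourable $\nabla\mathcal{G}$ terms with the right sign, and (ii) the source term is estimated causally despite the acausal structure; this is precisely what forces the specific choice of the approximations $f_\ep$, as in \cite{TT}, and is the single step where the simultaneous presence of dissipation and a forcing term genuinely interact.
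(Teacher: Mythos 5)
Your outline of items (a), (c), (d), (e) follows the paper's route (modulo the cosmetic difference that the paper rescales to $u_\ep(t)=w_\ep(\ep t)$ and works with the auxiliary functional $J_\ep$), and you correctly identify that the interaction between the acausal energy and the source term is where the choice of $f_\ep$ from \cite{TT} enters. However, there is a genuine gap at the single most important step: your derivation of the differential inequality for the approximate energy. You propose to ``differentiate $\mathcal{E}_\ep^d$ in $t$ and substitute the Euler--Lagrange equation for $w_\ep''$''. Differentiating the kinetic part $\tfrac12\|w_\ep'(t)\|_{L^2}^2$ produces $(w_\ep''(t),w_\ep'(t))_{L^2}$, and substituting the Euler--Lagrange equation turns this into the pairing $\langle\nabla\mathcal{W}(w_\ep(t)),w_\ep'(t)\rangle$ (plus terms in $w_\ep'''$, $w_\ep''''$ exceeding the $H^2_{loc}$ regularity of the minimizer). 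Under the general Assumption \ref{ass-W} this pairing is not defined: $\nabla\mathcal{W}(w_\ep(t))\in\mathrm{W}'$ while $w_\ep'(t)$ lies only in $L^2$ (or $\mathrm{G}$), not in $\mathrm{W}$. This is exactly the obstruction \eqref{eq-meaning} that the paper discusses at length as the reason energy \emph{conservation} is open, and it is present already at the $\ep$ level; the rigorous form of the Euler--Lagrange equation (the representation formula for $u_\ep''$) only admits test elements $h\in\mathrm{W}\cap\mathrm{G}$. Moreover, there is no cancellation to rescue you: the potential part of the approximate energy is the averaged quantity $\mathcal{A}^2\mathcal{W}_\ep(t)$, whose $t$-derivative is $\mathcal{A}^2\mathcal{W}_\ep(t)-\mathcal{A}\mathcal{W}_\ep(t)$ and contains no $\nabla\mathcal{W}$ term to absorb the bad pairing.

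The paper's (and \cite{ST1,ST2,TT}'s) way around this is the one ingredient missing from your proposal: \emph{inner variations}. One perturbs the minimizer by a time reparametrization, $U_\delta(t)=u_\ep(t-\delta g(t))+t\ep\delta g'(0)w_1$, and exploits minimality to obtain the identity \eqref{eq-relder} of Proposition \ref{prop-jderd}, whose localized version \eqref{eq-relt} expresses $K_\ep'(t)$ entirely in terms of the well-defined averaged scalars $\mathcal{A}L_\ep$, $\mathcal{A}^2L_\ep$, $\mathcal{A}D_\ep$, $\mathcal{A}\mathcal{G}_\ep$ and $\mathcal{A}^2\Phi_\ep$ --- the pairing $\langle\nabla\mathcal{W},u_\ep'\rangle$ never appears (it is replaced by the scalar $\mathcal{W}_\ep(t)$ inside $L_\ep$). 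The dissipative correction in $E_\ep^d$ is then calibrated so that its $t$-derivative, $\tfrac1\ep(\mathcal{A}\mathcal{G}_\ep+\mathcal{A}^2\mathcal{G}_\ep)$, exactly cancels the dissipative contributions of \eqref{eq-relt}, leaving $\tfrac{d}{dt}E_\ep^d(t)=-3\mathcal{A}D_\ep(t)-\mathcal{A}^2D_\ep(t)+\mathcal{A}^2\Phi_\ep(t)$; only then does the Gr\"onwall argument you describe apply. A secondary gloss: in item (d) one cannot pass to the limit in $\mathcal{W}(w_\ep(t))$ pointwise in $t$; the paper first averages over small time intervals (via \cite[Lemma 6.1]{ST2}) and only afterwards shrinks the interval at Lebesgue points.
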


\begin{remark}
 The functional defined in \eqref{eq-Wass} satisfies Assumption \ref{ass-W} with $\mathrm{W}=\{v\in H^m:\nabla^kv\in L^{p_k},\,0\leq k<m\}$ (for details see \cite{ST2}). Recall also that $\|v\|_{\dot{H}^m}$ is the $L^2$ norm of $|\xi|^m\,\widehat{v}(\xi)$, where $\widehat{v}$ is the Fourier transform of $v$. The typical case is $m\in\mathbb{N}$ when $\|v\|_{\dot{H}^m}$ reduces to $\|\nabla^mv\|_{L^2}$.
\end{remark}

Some comments are in order. There are several possible variants of Theorem \ref{result} that one could prove as well (e.g. nonconstant coefficients in \eqref{eq-Wass}, more general lower-order terms, $\R^n$ replaced by domains with Dirichlet or Neumann boundary conditions). For more details we refer the reader to \cite{ST1,ST2,TT}.

In addition, we highlight that it is still an open problem if the assumption on the quadraticity of the highest order term of $\mathcal{W}$ can be removed. Furthermore, it is clear that Assumption \ref{ass-G} entails that only linear dissipative terms are allowed in \eqref{eq-waved} (although with no prescription on the space order).

On the other hand, we note that (as in \cite{TT}) Theorem \ref{result} holds under the sole assumption $f\in L^2_{\text{loc}}([0,\infty);L^2)$ on the source term (which is the natural assumption if one seeks solutions of \eqref{eq-waved} with finite energy -- see e.g.~\cite{cherrier,lions2,TT}).

Finally, estimate \eqref{eq-enineqd} deserves some remarks. As we pointed out in \eqref{eq-meccons}, $\mathcal{E}^d$ is \emph{formally} preserved along the flow. In other words, one would expect that the actual energy $\mathcal{E}$ (defined by \eqref{eq-acmec}) is balanced at each time by the action of the external sources and the dissipation, that is (recalling that $\mathcal{E}^d(0)=\mathcal{E}(0)$) one formally has
\begin{equation}
 \label{eq-form_cons}
 \mathcal{E}(t)-\mathcal{E}(0)=-2\int_0^t\mathcal{G}\big(w'(s)\big)\ds+\int_0^t\int_{\R^n}f(s,x)w'(s,x)\dx\ds.
\end{equation}
However, we are not able to prove \eqref{eq-form_cons}, whose
 validity is a longstanding open problem in the theory of nonlinear
 wave-type equations (the problem is open even for the NLW equation \eqref{eq-nlw} with large enough $p$, see \cite{ST1}).
  The main difficulty is that a formal differentiation of
    \eqref{eq-acmec} with respect to $t$ produces the ``bad term''
\begin{equation}
 \label{eq-meaning}
 \left\langle\nabla\mathcal{W}\big(w(t)\big),w'(t)\right\rangle
\end{equation}
which is not well-defined as a duality pairing
(unless one obtains further regularity for $w(t)$,
such as the integrability of $|w(t)|^{p-1}w'(t)$ in the model case of the NLW equation \eqref{eq-nlw}: this regularity, however, is currently not available).
 Indeed, if enough regularity were available, 
 then one could compute the derivative of $\mathcal{E}$ and (using \eqref{eq-waved} and Assumption \ref{ass-G}) one would obtain
\begin{equation}
 \label{eq-form_der}
 \mathcal{E}'(t)=2\mathcal{G}\big(w'(t)\big)+\int_{\R^n}f(t,x)w'(t,x)\dx.
\end{equation}
Unfortunately, the techniques we developed thus far are not sufficient, in general, neither to prove that the approximate energy $\mathcal{E}_\ep$ satisfies \eqref{eq-form_der} in the limit, nor to establish sufficient regularity on $w'(t)$ to give a rigorous meaning to \eqref{eq-meaning}.

More generally, the regularity of the solutions  is a crucial
 issue in the discussion of the energy conservation
 (see e.g. \cite{struwe}). With this respect the De Giorgi approach,
 which provides an entirely new way to construct  global solutions,
  is still unexplored as concerns their regularity: indeed,
  in principle, the variational solutions thus obtained might
inherit some regularity features from
  those of the minimizers $w_\ep$, which
  could hopefully be proved by adapting classical tools of elliptic
  regularity theory. Of course, the degeneracy in the limit
     of the integral weight $e^{-t/\ep}$ makes it hard to guess
      the precise regularity that may be preserved in the limit,
        and this issue is an open problem which would deserve
        a thorough investigation.

Finally we point out that, despite of the technical difficulties
related to energy conservation,  the estimate that we establish in \eqref{eq-enineqd} is close to being optimal,
as one can easily
check by applying a formal Gr\"onwall-type argument, combined with Jensen inequality, to \eqref{eq-meccons}.


\section{Examples}
\label{sec-examples}

Before carrying on with the proof of Theorem \ref{result}, it is convenient to better explain, through some examples,
 how it can be applied in some  concrete  cases of dissipative hyperbolic
 equations.

\bigskip
\noindent\textbf{1. Telegraph equation.} One of the classical wave equations with dissipation is the nonlinear Telegraph equation
\begin{equation}
 \label{eq-telegraph}
 w''=\Delta w-|w|^{p-2}\,w-w'+f\qquad(p>1),
\end{equation}
which takes the form \eqref{eq-waved} if we let
\begin{equation}
 \label{eq-ex1}
 \mathcal{W}(v)=\int_{\R^n}\left(\frac{1}{2}|\nabla v|^2+\frac{1}{p}|v|^p\right)\dx\qquad\text{and}\qquad\mathcal{G}(v)=\frac{1}{2}\int_{\R^n}|v|^2\dx.
\end{equation}
One can easily check that Assumptions \ref{ass-W} and \ref{ass-G} are fulfilled, with the choices
\[
\mathrm{W}:=H^1\cap L^p, \quad
  \theta:=1-1/\max\{2,p\}, \quad \mathrm{G}=L^2.
\]
In addition, $\mathcal{W}$ is clearly of the form \eqref{eq-Wass}, so that Theorem \ref{result} fully applies. As a consequence, the Cauchy problem \eqref{eq-telegraph}\&\eqref{eq-ic} admits (at least) a distributional solution $w$, which is the limit (up to subsequences) of the minimizers of the functionals $F_\ep$ defined by \eqref{eq-fuc} in view of \eqref{eq-ex1}. In addition, such a \emph{variational solution} $w$ of \eqref{eq-telegraph}-\eqref{eq-ic} also satisfies the energy inequality \eqref{eq-enineqd}.

\bigskip
\noindent\textbf{2. Telegraph with $p$-Laplacian.} Another example is the $p$-Laplacian version of \eqref{eq-telegraph}, namely
\begin{equation}
 \label{eq-tel_plap}
 w''=\Delta_p w-|w|^{q-2}\,w-w'+f\qquad(p,\,q>1,\,p\neq2),
\end{equation}
which is obtained setting
\begin{equation}
 \label{eq-plap}
 \mathcal{W}(v)=\int_{\R^n}\left(\frac{1}{p}|\nabla v|^p+\frac{1}{q}|v|^q\right)\dx
\end{equation}
and $\mathcal{G}$ as in \eqref{eq-ex1}. Again, 
assumptions \ref{ass-W} and \ref{ass-G} are satisfied, now
letting 
\[
\mathrm{W}:=\{v\in L^2:\nabla v\in L^p,\,v\in L^q\},\quad
  \theta:=1-1/\max\{p,q\},\quad  \mathrm{G}:=L^2.
   \]
 Hence, according to (a)--(d) of Theorem \ref{result}, the sequence of functionals $F_\ep$ possesses a sequence of minimizers $w_\ep$ that converge (up to subsequences) to a function $w$, which satisfies the initial conditions \eqref{eq-ic} and the energy inequality \eqref{eq-enineqd}.

However, since clearly in this case $\mathcal{W}$ does not fulfill \eqref{eq-Wass}, item (e) of Theorem \ref{result} is not available. In other words, even though the limit function $w$ is 
   a natural candidate  to solve \eqref{eq-tel_plap}, the theory so far
   developed does not guarantee  that this is the case.

This situation, as mentioned in the Introduction,  occurs for every \emph{quasilinear} equation, in which case item (e) of Theorem \ref{result}
is not available (this should not come as a surprise since,
as is well known, 
global existence for these equations is a longstanding open problem). 
 Anyway, no counterexample to global existence is known, and
 a full extension of the De Giorgi approach to this class of hyperbolic equations would certainly be a major breakthrough.

\bigskip
\noindent\textbf{3. Strongly damped wave equations.} These equations are characterized by the presence of the term $\Delta w'$ (see e.g.  \cite{kalantarov} for more details). For instance,
\begin{equation}
 \label{eq-stronguno}
 w''=\Delta w-|w|^{p-2}\,w+\Delta w'+f\qquad(p>1),
\end{equation}
can be obtained letting
\begin{equation}
 \label{eq-strongdamp}
 \mathcal{W}(v)=\int_{\R^n}\left(\frac{1}{2}|\nabla v|^2+\frac{1}{p}|v|^p\right)\dx\qquad\text{and}\qquad\mathcal{G}(v)=\frac{1}{2}\int_{\R^n}|\nabla v|^2\dx,
\end{equation}
in \eqref{eq-waved}, while its $p$-laplacian version
\begin{equation}
 \label{eq-strongdue}
 w''=\Delta_p w-|w|^{q-2}\,w+\Delta w'+f\qquad(p,\,q>1,\,p\neq2),
\end{equation}
is obtained choosing $\mathcal{W}$ as in \eqref{eq-plap} and $\mathcal{G}$ as in \eqref{eq-strongdamp}. In both cases, Assumptions \ref{ass-W} and \ref{ass-G} are satisfied (with $\mathrm{G}=H^1$ and either $\mathrm{W}=H^1\cap L^p$, $\theta=1-1/\max\{2,p\}$ in the first case, or $\mathrm{W}=\{v\in L^2:\nabla v\in L^p,\,v\in L^q\}$, $\theta=1-1/\max\{p,q\}$ in the second case). As a consequence, in both cases, by (a)--(d) of
Theorem \ref{result} there exists a sequence of minimizers of $F_\ep$ that converges to a function $w$ which fulfills \eqref{eq-ic} and \eqref{eq-enineqd}. However, while for the first equation
\eqref{eq-stronguno}
the functional $\mathcal{W}$ in \eqref{eq-strongdamp}
takes the form prescribed by \eqref{eq-Wass}, and thus $w$ is a solution of \eqref{eq-stronguno} by (e) of Theorem \ref{result}, in the case
of equation \eqref{eq-strongdue} item (e) of Theorem \ref{result}
does not apply, and it is an open problem
to establish whether $w$ actually solves \eqref{eq-strongdue}.

\bigskip
\noindent\textbf{4. Other damped wave equations.} These equations are
 obtained adding, for example, the sum of the above mentioned terms, i.e. $\Delta w'$ and $-w'$, with
 \[
 \mathcal{G}(v)=\frac{1}{2}\int_{\R^n}\left(|\nabla v|^2+|v|^2\right)\dx\qquad\text{and}\quad\mathrm{G}=H^1.
 \]
 On the other hand, higher order dissipative terms are allowed too, such as for instance $-\Delta^2 w'$, where
 \[
 \mathcal{G}(v)=\frac{1}{2}\int_{\R^n}|\Delta v|^2\dx\qquad\text{and}\quad\mathrm{G}=H^2,
\]
as well as the combinations with $\Delta w'$, $-w'$ and their sum (with an obvious definition of $\mathcal{G}$ and $\mathrm{G}$).


\section{Minimizers}

As we mentioned in Section \ref{sec-intro}, the first step of the De Giorgi approach is the search of the minimizers of \eqref{eq-fuc}. However, in order to drop the $\ep$ dependence in the weight of the integrals (and thus simplify the problem) it is convenient to minimize the auxiliary functional
\[
 J_{\ep}(u):=H_{\ep}(u)+\Delta_\ep(u)-S(u),
\]
where
\begin{gather}
 H_{\ep}(u):=\int_0^{\infty}e^{-t}\left(\frac{1}{2\ep^2}\int_{\R^n}|u''(t,x)|^2\dx+\mathcal{W}\big(u(t)\big)\right)\dt,\nonumber\\[.3cm]
 \Delta_{\ep}(u):=\frac{1}{\ep}\int_0^{\infty}e^{-t}\,\mathcal{G}(u'(t))\dt,\nonumber\\[.3cm]
 \label{eq-funzjsor}
 S(u):=\int_0^{\infty}\int_{\R^n}e^{-t}\,\phi(t,x)u(t,x)\dx\dt
\end{gather}
($\phi$ being a fixed function on $[0,\infty)\times\R^n$), which is equivalent to $F_{\ep}$ in the sense that, setting $\phi(t,x)=f_{\ep}(\ep t,x)$, there results $F_{\ep}(w)=\ep J_{\ep}(u)$, whenever $u(t,x)=w(\ep t,x)$. In other words, with a proper dilation of the boundary conditions (see e.g., \eqref{eq-bc}) the existence of minimizers $w_{\ep}$ for $F_{\ep}$ is equivalent to the existence of minimizers $u_{\ep}$ for $J_{\ep}$.

\medskip
In addition, following \cite{TT}, we define the weighted $L^2$-norm
\[
\Vert v\Vert_{\LL}^2:=\int_0^{\infty}\int_{\R^n}e^{-t}\,|v(t,x)|^2\dx\dt
\]
(with values in $[0,+\infty]$) and, for fixed $\ep$, we make the following assumptions on $\phi(t,x)$:
\begin{gather}
  \label{suppcomt} \text{there exists}\quad T^*\in\big(0,\ep^{-3/2}\big]\qquad\text{such that}\qquad\phi(t,x)=0,\quad\forall t>T^*,\\[.25cm]
 \label{eq-assogr} \|\phi\|_\LL \leq \ep,\\
 \label{assAqprel} \ep \int_0^t \int_s^\infty e^{-(\tau-s)}\,(\tau-s)\,\|\phi(\tau)\|_{L^2}^2\dtau\,ds\leq\gamma(\ep t+t_\ep)+\ep^2,\qquad\forall t\geq 0,
\end{gather}
where $t_\ep>0$ satisfies $\lim_{\ep\downarrow0} t_\ep=0$, while
\begin{equation}
\label{eq-gamma}
\gamma(t):=\int_0^t \Vert f(s)\Vert_{L^2}^2\,ds,\qquad t\geq 0
\end{equation}
(which is well defined since the forcing term $f$ of \eqref{eq-waved} belongs to $L_{loc}^2([0,\infty);L^2)$).

\medskip
In the sequel we shall use the following lemma (for a proof
see \cite[Lemma 2.3]{ST1}).
\begin{lemma}
\label{lem-poincare}
 If $u\in H^2_{\text{loc}}([0,\infty);L^2)$, then
 \[
  \Vert u'\Vert_\LL^2\leq 2\,\Vert u'(0)\Vert_{L^2}^2+4\,\Vert u''\Vert_\LL^2
 \]
 and
 \[
\Vert u\Vert_\LL^2\leq 2\,\Vert u(0)\Vert_{L^2}^2 +8\,\Vert u'(0)\Vert_{L^2}^2+16\,\Vert u''\Vert_\LL^2.
 \]
\end{lemma}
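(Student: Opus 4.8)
The plan is to reduce both estimates to a single "master" inequality for the weighted norm,
\[
\|W\|_\LL^2\leq 2\,\|W(0)\|_{L^2}^2+4\,\|W'\|_\LL^2,\qquad\forall\,W\in H^1_{\mathrm{loc}}([0,\infty);L^2),
\]
obtained by an integration by parts in time in which the weight $e^{-t}$ is differentiated. Once this is available, the first statement of the lemma is just the special case $W=u'$ (legitimate since $u\in H^2_{\mathrm{loc}}$ gives $u'\in H^1_{\mathrm{loc}}$), while the second follows by applying the master inequality to $W=u$ and then inserting the bound just obtained for $\|u'\|_\LL^2$; this yields $\|u\|_\LL^2\le 2\|u(0)\|_{L^2}^2+4\bigl(2\|u'(0)\|_{L^2}^2+4\|u''\|_\LL^2\bigr)$, which is exactly the claimed constants $2,8,16$. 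So the whole content of the lemma is the master inequality, which I would prove working directly with the $L^2(\R^n)$-valued function $W$.

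First I would note that, since $t\mapsto\|W(t)\|_{L^2}^2$ is absolutely continuous on each $[0,T]$ with derivative $2\,(W,W')_{L^2}$, one has
\[
\frac{d}{dt}\Big(e^{-t}\|W(t)\|_{L^2}^2\Big)=-e^{-t}\|W(t)\|_{L^2}^2+2\,e^{-t}\bigl(W(t),W'(t)\bigr)_{L^2}.
\]
Integrating over $[0,T]$ and dropping the favorable (nonpositive) boundary term $-e^{-T}\|W(T)\|_{L^2}^2$, I obtain
\[
\int_0^T e^{-t}\|W(t)\|_{L^2}^2\dt\leq\|W(0)\|_{L^2}^2+2\int_0^T e^{-t}\bigl(W(t),W'(t)\bigr)_{L^2}\dt.
\]
Keeping the endpoint term at $T$ with its correct sign is what makes the argument clean: it removes any need to discuss the decay of $e^{-t}\|W(t)\|_{L^2}^2$ as $t\to\infty$.

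Next I would absorb the cross term by Cauchy--Schwarz in $L^2(\R^n)$ together with the weighted Young inequality $2ab\le\tfrac12 a^2+2b^2$, which gives $2(W,W')_{L^2}\le\tfrac12\|W\|_{L^2}^2+2\|W'\|_{L^2}^2$ pointwise in $t$. Substituting, the term $\tfrac12\int_0^T e^{-t}\|W\|_{L^2}^2$ can be moved to the left-hand side, and bounding the remaining nonnegative integral by $\|W'\|_\LL^2$ yields $\int_0^T e^{-t}\|W\|_{L^2}^2\dt\le 2\|W(0)\|_{L^2}^2+4\|W'\|_\LL^2$ for every $T$; letting $T\to\infty$ by monotone convergence gives the master inequality.

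The only genuinely delicate point is the justification of the integration by parts. I would handle it by observing that if $\|u''\|_\LL=\infty$ both inequalities are trivial, so I may assume the right-hand sides finite; in that case the absolute continuity of $t\mapsto\|W(t)\|_{L^2}^2$ on each bounded interval and the product-rule identity for its derivative are standard facts for $H^1_{\mathrm{loc}}([0,\infty);L^2)$ functions. Beyond this bookkeeping there is no real obstacle: the exponential weight is precisely what drives the estimate, and the finite-$T$ truncation disposes of any issue at infinity.
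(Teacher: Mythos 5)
Your proof is correct: the integration by parts with the weight $e^{-t}$, the absorption of the cross term via $2(W,W')_{L^2}\le\tfrac12\|W\|_{L^2}^2+2\|W'\|_{L^2}^2$ (legitimate since the truncated integral is finite for $W\in H^1_{\mathrm{loc}}$), and the iteration $W=u'$, then $W=u$, reproduce exactly the constants $2,4$ and $2,8,16$. The paper does not prove the lemma but defers to \cite[Lemma 2.3]{ST1}, whose argument is this same one-dimensional weighted Poincar\'e inequality applied twice, so your route coincides with the cited proof.
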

Now we are in a position to prove the existence of minimizers for $J_\ep$.
\begin{proposition}
 \label{prop-mind}
 Let $w_0\in\mathrm{W},\,w_1\in\mathrm{W}\cap\mathrm{G}$ (with $\mathrm{W}$ and $\mathrm{G}$ defined by \eqref{eq-domW} and \eqref{eq-defG}, respectively) and $\ep\in(0,1)$. Then, under Assumptions \ref{ass-W} and \ref{ass-G}, $J_{\ep}$ admits a minimizer $u_{\ep}$ in the class of functions $u\in H_{loc}^2([0,\infty);L^2)$ subject to the boundary conditions
 \begin{equation}
  \label{eq-bc}
  u(0)=w_0,\qquad u'(0)=\ep w_1.
 \end{equation}
 In addition,
 \begin{equation}
  \label{eq-levd}
  H_{\ep}(u_{\ep})+\Delta_{\ep}(u_{\ep})\leq \mathcal{W}(w_0)+\ep C.
 \end{equation}
\end{proposition}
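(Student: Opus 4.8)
The plan is to apply the direct method of the calculus of variations to $J_\ep$ over the affine class
\[
\mathcal{A}:=\{u\in H^2_{\text{loc}}([0,\infty);L^2):u(0)=w_0,\ u'(0)=\ep w_1\}.
\]
The only term of $J_\ep$ that is not manifestly nonnegative is $-S$, so the first task is to show it cannot destroy coercivity. By Cauchy--Schwarz in the weighted norm $\|\cdot\|_\LL$ and assumption \eqref{eq-assogr} one has $|S(u)|\leq\|\phi\|_\LL\|u\|_\LL\leq\ep\|u\|_\LL$; feeding this into the second inequality of Lemma \ref{lem-poincare} (and recalling that $u(0)=w_0$, $u'(0)=\ep w_1$ are fixed) gives $|S(u)|\leq\ep\big(\sqrt2\,\|w_0\|_{L^2}+2\sqrt2\,\ep\|w_1\|_{L^2}+4\|u''\|_\LL\big)$. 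A Young inequality absorbs $4\ep\|u''\|_\LL$ into $\tfrac{1}{4\ep^2}\|u''\|_\LL^2$, which is dominated by the quadratic part $\tfrac{1}{2\ep^2}\|u''\|_\LL^2$ of $H_\ep$. Since $\mathcal{W}\geq0$ and $\mathcal{G}\geq0$, this yields
\[
J_\ep(u)\geq\tfrac{1}{4\ep^2}\|u''\|_\LL^2+\int_0^\infty e^{-t}\mathcal{W}(u(t))\dt+\Delta_\ep(u)-C\ep,
\]
so $J_\ep$ is bounded below and coercive in $\|u''\|_\LL$ (hence, through Lemma \ref{lem-poincare} again, in the full weighted $H^2$ norm). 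Note that only \eqref{eq-assogr} is used here; the support condition \eqref{suppcomt} and the refined estimate \eqref{assAqprel} enter only in the later a priori bounds.

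Armed with coercivity, I would take a minimizing sequence $(u_k)\subset\mathcal{A}$: the estimate above makes $\|u_k''\|_\LL$, $\int_0^\infty e^{-t}\mathcal{W}(u_k(t))\dt$ and $\Delta_\ep(u_k)$ bounded, and Lemma \ref{lem-poincare} transfers this to uniform bounds on $\|u_k\|_\LL$ and $\|u_k'\|_\LL$. The space of $u$ with $\|u\|_\LL,\|u'\|_\LL,\|u''\|_\LL$ finite is a Hilbert space, so a subsequence converges weakly to some $u_\ep$. The boundary conditions \eqref{eq-bc} persist in the limit because the evaluation maps $u\mapsto u(0)$ and $u\mapsto u'(0)$ are bounded linear (the weight $e^{-t}$ stays bounded away from $0$ on compact time intervals, where $H^2\hookrightarrow C^1$), hence weakly continuous. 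For lower semicontinuity I would treat the pieces separately: $\tfrac{1}{2\ep^2}\|u''\|_\LL^2$ is weakly lsc by convexity; $-S$ is weakly continuous, since $S$ is bounded linear for $\|\cdot\|_\LL$, a norm weaker than the weighted $H^2$ norm; and for $\int_0^\infty e^{-t}\mathcal{W}(u(t))\dt$ and $\Delta_\ep(u)$ I would use that weak convergence forces, for every fixed $t$, $u_k(t)\rightharpoonup u_\ep(t)$ and $u_k'(t)\rightharpoonup u_\ep'(t)$ in $L^2$ (again by weak continuity of the evaluations), so that the weak lsc of $\mathcal{W}$ and $\mathcal{G}$ gives $\mathcal{W}(u_\ep(t))\leq\liminf_k\mathcal{W}(u_k(t))$ and likewise for $\mathcal{G}$; Fatou's lemma then upgrades these to the integrated statements. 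Combining, $J_\ep(u_\ep)\leq\liminf_k J_\ep(u_k)=\inf_\mathcal{A}J_\ep$, so $u_\ep$ is a minimizer.

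For the bound \eqref{eq-levd} I would compare $u_\ep$ with the explicit competitor $\bar u(t):=w_0+\ep t\,w_1\in\mathcal{A}$, which has $\bar u''\equiv0$ and constant velocity $\bar u'\equiv\ep w_1$. Thus the quadratic part of $H_\ep(\bar u)$ vanishes, while $\Delta_\ep(\bar u)=\ep\,\mathcal{G}(w_1)=O(\ep)$ (using that $\mathcal{G}$ is quadratic by \eqref{eq-defG}, and finite since $w_1\in\mathrm{G}$). For the potential part I would set $\mu(s):=\mathcal{W}(w_0+s w_1)$ and use the growth bound of Assumption \ref{ass-W} to obtain $\mu'(s)\leq C\|w_1\|_\mathrm{W}(1+\mu(s)^\theta)$ with $\theta<1$, whose integration gives at most polynomial growth of $\mu$; since $e^{-t}$ suppresses the region $t\gtrsim\ep^{-1}$ where $\ep t\,w_1$ is large, a first-order expansion yields $\int_0^\infty e^{-t}\mu(\ep t)\dt\leq\mathcal{W}(w_0)+O(\ep)$. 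Since also $|S(\bar u)|\leq\ep\|\bar u\|_\LL=O(\ep)$, we get $J_\ep(u_\ep)\leq J_\ep(\bar u)\leq\mathcal{W}(w_0)+C\ep$. The coercivity bound applied to $u_\ep$ then forces $\|u_\ep''\|_\LL^2\leq4\ep^2(\mathcal{W}(w_0)+C\ep)=O(\ep^2)$, whence (Lemma \ref{lem-poincare}) $\|u_\ep\|_\LL=O(1)$ and $|S(u_\ep)|\leq\ep\|u_\ep\|_\LL=O(\ep)$; rewriting $H_\ep(u_\ep)+\Delta_\ep(u_\ep)=J_\ep(u_\ep)+S(u_\ep)$ gives precisely \eqref{eq-levd}.

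The delicate points I would watch are the lower semicontinuity of the potential and dissipation integrals — where one must pass from weak convergence to pointwise-in-time weak $L^2$ convergence before invoking the weak lsc of $\mathcal{W}$, $\mathcal{G}$ and Fatou, neither functional being assumed convex for $\mathcal{W}$ — and the tail estimate on $\int_0^\infty e^{-t}\mathcal{W}(w_0+\ep t\,w_1)\dt$, where the superlinear growth of $\mathcal{W}$ along the ray must be beaten by the exponential weight. The genuinely new feature relative to \cite{ST2,TT} is that $-S$ and $\Delta_\ep$ appear together; since $\Delta_\ep\geq0$ it only aids coercivity, so the real balancing is between the indefinite linear term $S$ and the quadratic part of $H_\ep$, exactly as controlled above.
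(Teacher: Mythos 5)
Your proposal is correct and follows essentially the same route as the paper: coercivity by absorbing the linear term $S$ into the quadratic part of $H_\ep$ via \eqref{eq-assogr}, Lemma \ref{lem-poincare} and Young's inequality, the direct method with pointwise-in-time weak $L^2$ convergence feeding the weak lower semicontinuity of $\mathcal{W}$ and $\mathcal{G}$ plus Fatou, and the competitor $w_0+\ep t\,w_1$ to obtain \eqref{eq-est_alto} and hence \eqref{eq-levd}. You simply spell out details (the lsc argument, the growth of $s\mapsto\mathcal{W}(w_0+sw_1)$) that the paper delegates to \cite{ST2,TT}; the only nuance is that \eqref{suppcomt} is also what guarantees $S(u)$ is finite on all of $M$, not just on the sublevel sets where $\|u''\|_\LL<\infty$.
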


\begin{proof}
Let $M$ be the set of functions in  $H^2_{\text{loc}}([0,\infty);L^2)$ satisfying \eqref{eq-bc}. If $u\in M$, then $S(u)$ is finite by \eqref{suppcomt}, and thus $J_\ep(u)$ is well defined (possibly equal to $+\infty$). In addition, whenever $J_\ep(u)$ is finite, arguing as in \cite[proof of Proposition 4.1]{TT} (and recalling that $\mathcal{G}\geq0$, as well as $\mathcal{W}$), there results
\[
 |S(u)|\leq\|\phi\|_\LL\|u\|_\LL\leq\ep C\left(1+\|u''\|_\LL\right)
\]
with $C$ depending on \eqref{eq-bc}. As a consequence, plugging into the definition of $J_\ep$,
\begin{equation}
\label{coerc}
 J_{\ep}(u)\geq\frac{1}{2\ep^2}\Vert u''\Vert_\LL^2+\int_0^\infty e^{-t}\big[\mathcal{W}\bigl(u(t)\bigr)+\tfrac{1}{\ep}\mathcal{G}\big(u'(t)\big)\big]\dt-\ep C\left(1+\|u''\|_\LL\right)
\end{equation}
and thus, using Lemma \ref{lem-poincare}, $J_{\ep}$ is coercive in $M$ with respect to the topology of $H_{\text{loc}}^2([0,\infty);L^2)$. Hence, every minimizing sequence has a subsequence weakly convergent in $H_{\text{loc}}^2([0,\infty);L^2)$, that preserves \eqref{eq-bc}. Finally, by the weak semicontinuity of $H_\ep$ and $\Delta_\ep$ (consequences of Assumptions \ref{ass-W} and \ref{ass-G}) and the weak continuity of $S(u)$, one finds a minimizer $u_\ep\in M$ of $J_\ep$.

Now, letting $\psi(t,x):=w_0(x)+\ep t w_1(x)$, and observing that $\psi\in M$ and $\psi''\equiv 0$, following \cite[proof of Lemma 3.1]{ST2} and \cite[proof of Proposition 4.2]{TT}, one obtains that
 \[
 H_\ep(\psi)+\Delta_\ep(\psi)\leq\mathcal{W}(w_0)+C\ep,
 \]
 and that
 \[
  -S(\psi)\leq\left(\|w_0\|_{L^2}+\sqrt{2}\,\ep\|w_1\|_{L^2}\right)\|\phi\|_\LL\leq C\ep.
 \]
 Thus, $J_\ep(\psi)\leq \mathcal{W}(w_0)+C\ep$, whence
 \begin{equation}
  \label{eq-est_alto}
  J_\ep(u_\ep)\leq \mathcal{W}(w_0)+C\ep.
 \end{equation}
 Moreover, combining $J_\ep(u_\ep)\leq C$ with \eqref{coerc} and Young's inequality, one finds that $\|u_\ep''\|_\LL \leq \ep C$, which entails (plugging into \eqref{coerc})
\begin{equation*}
   J_{\ep}(u_\ep)\geq
\frac{1}{2\ep^2}\Vert u''_\ep\Vert_\LL^2+\int_0^\infty e^{-t}\big[\mathcal{W}\bigl(u(t)\bigr)+\tfrac{1}{\ep}\mathcal{G}\big(u'(t)\big)\big]\dt-\ep C.
\end{equation*}
Finally, recalling \eqref{eq-est_alto}, one easily obtains \eqref{eq-levd}.
\end{proof}

Some notation is required in the sequel. First, we assume throughout that $\ep\in(0,1)$, as in Proposition \ref{prop-mind}. Furthermore, given a minimizer $u_{\ep}$ of $J_{\ep}$, we define for every $t\geq 0$:
\begin{gather}
 \label{eq-W}
 \mathcal{W}_{\ep}(t):=\mathcal{W}\big(u_{\ep}(t)\big),\\[.2cm]
 D_{\ep}(t):=\displaystyle\frac{1}{2\ep^2}\|u_{\ep}''(t)\|_{L^2}^2,\nonumber\\[.2cm]
 \label{eq-G}
 \mathcal{G}_{\ep}(t):=\mathcal{G}\big(u_{\ep}'(t)\big),\\[.3cm]
 \label{eq-L}
 L_{\ep}(t):=D_{\ep}(t)+\mathcal{W}_{\ep}(t)+\tfrac{1}{\ep}\mathcal{G}_{\ep}(t),\\[.3cm]
 \Phi_{\ep}(t):=\big(\phi(t),u_{\ep}'(t)\big)_{L^2},\nonumber\\[.3cm]
 \label{eq-kinen}
 K_{\ep}(t):=\frac{1}{2\ep^2}\|u_{\ep}'(t)\|_{L^2}^2.
\end{gather}
Note that $D_\ep$ and $L_\ep$ are actually defined almost everywhere.

\begin{proposition}
 \label{prop-jderd}
 Let $w_0,\,w_1\text{ and }\mathcal{W}$ satisfy the assumptions of Proposition \ref{prop-mind} and let $u_{\ep}$ be a minimizer of $J_{\ep}$. Then, for every $g\in C^2([0,\infty))$ constant for large $t$ and with $g(0)=0$,
 \begin{equation}
  \label{eq-relder}
  \begin{array}{l}
   \displaystyle\int_0^{\infty}e^{-t}\,\big(g'(t)-g(t)\big)L_{\ep}(t)\dt+\int_0^{\infty}e^{-t}\,g(t)\Phi_{\ep}(t)\dt+\\[.5cm]
   \hspace{3.5cm}\displaystyle-\int_0^{\infty}e^{-t}\,\big(4g'(t)D_{\ep}(t)+g''(t)K_{\ep}'(t)+\tfrac{2}{\ep}g'(t)\mathcal{G}_\ep(t)\big)\dt=g'(0)R(u_{\ep}),
  \end{array}
 \end{equation}
 where the residual term
 \begin{equation}
  \label{eq-resto}
  R(u_{\ep}):=\ep\int_0^{\infty}e^{-t}\big[\,t\,\big(-\left\langle\nabla\mathcal{W}\big(u_{\ep}(t)\big),w_1\right\rangle+(\phi(t),w_1)_{L^2}\big)-\tfrac{1}{\ep}\left\langle\nabla\mathcal{G}\big(u_{\ep}'(t)\big),w_1\right\rangle\big]\dt
 \end{equation}
 satisfies the estimate
  \begin{equation}
  \label{eq-restest}
  |R(u_{\ep})|\leq C\sqrt{\ep}.
   \end{equation}
\end{proposition}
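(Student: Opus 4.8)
The plan is to obtain \eqref{eq-relder} as the vanishing of the first variation of $J_\ep$ at its minimizer $u_\ep$, computed along a one-parameter family of competitors built from a reparametrization of the time variable (an \emph{inner}, or domain, variation) corrected so as to preserve the boundary conditions \eqref{eq-bc}. Concretely, for $g$ as in the statement and $|\sigma|$ small, I would set $\Phi_\sigma(t):=t+\sigma g(t)$, which (since $g\in C^2$ is constant for large $t$) is an increasing $C^2$-diffeomorphism of $[0,\infty)$ fixing the origin, and define
\[
u_\sigma(t):=u_\ep\big(\Phi_\sigma(t)\big)-\sigma\,g'(0)\,\ep\,t\,w_1.
\]
The first summand is the pure inner variation; the affine-in-$t$ second summand is a correction (with vanishing second time-derivative) that restores $u_\sigma'(0)=\ep w_1$, which the reparametrization alone would spoil, since it gives $u_\sigma'(0)=(1+\sigma g'(0))\ep w_1$. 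One checks that $u_\sigma\in H^2_{loc}([0,\infty);L^2)$ satisfies \eqref{eq-bc}, hence is admissible; then $\sigma\mapsto J_\ep(u_\sigma)$ attains its minimum at $\sigma=0$, so, once differentiability is in place, $\tfrac{d}{d\sigma}\big|_{\sigma=0}J_\ep(u_\sigma)=0$.

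I would then compute this derivative, splitting $u_\sigma$ into the inner part and the correction. For the inner part the cleanest route (as in \cite{ST2,TT}) is the change of variables $s=\Phi_\sigma(t)$, which transfers the entire $\sigma$-dependence onto the smooth weights $e^{-\Psi_\sigma(s)}\Psi_\sigma'(s)^k$ (with $\Psi_\sigma:=\Phi_\sigma^{-1}$, so that $\partial_\sigma\Psi_\sigma|_0=-g$ and $\partial_\sigma\Psi_\sigma'|_0=-g'$). Crucially, this avoids differentiating $\mathcal{W}$ and $\mathcal{G}$ and never produces $u_\ep'''$: only $u_\ep,u_\ep',u_\ep''$ appear, and the quadraticity of $\mathcal{G}$ makes the dissipative term transform simply by the factor $\Psi_\sigma'^{-1}$. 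Differentiating at $\sigma=0$ and collecting terms, the $(g'-g)$-weights assemble into $L_\ep=D_\ep+\mathcal{W}_\ep+\tfrac1\ep\mathcal{G}_\ep$, the $D_\ep$ term leaves a residual $4g'D_\ep$ together with a $g''K_\ep'$ term, the dissipation leaves $\tfrac2\ep g'\mathcal{G}_\ep$, and the linear term $S$ contributes $\int_0^\infty e^{-t}g\,\Phi_\ep\dt$; this is exactly the left-hand side of \eqref{eq-relder}. The correction contributes $-g'(0)\ep\,DJ_\ep(u_\ep)[t\,w_1]$, and since $(t w_1)''\equiv0$ the top-order part drops, so that what survives is precisely $g'(0)R(u_\ep)$ with $R$ as in \eqref{eq-resto}. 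Setting the total to zero and rearranging gives \eqref{eq-relder}.

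For \eqref{eq-restest} I would bound the three pieces of $R(u_\ep)$ separately. The dissipative piece is the dominant one: since $\mathcal{G}$ is quadratic, $\langle\nabla\mathcal{G}(u_\ep'(t)),w_1\rangle=a(u_\ep'(t),w_1)$, so the Cauchy--Schwarz inequality for $a$ gives $|a(u_\ep'(t),w_1)|\le 2\sqrt{\mathcal{G}(w_1)}\,\sqrt{\mathcal{G}_\ep(t)}$; a further Cauchy--Schwarz in $t$ together with $\int_0^\infty e^{-t}\mathcal{G}_\ep\dt=\ep\,\Delta_\ep(u_\ep)\le\ep C$ (from \eqref{eq-levd}) yields a bound $C\sqrt{\ep}$. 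The source piece is $O(\ep^2)$, by Cauchy--Schwarz and $\|\phi\|_\LL\le\ep$ from \eqref{eq-assogr}. The potential piece is $O(\ep)$: Assumption \ref{ass-W} gives $\|\nabla\mathcal{W}(u_\ep(t))\|_{\mathrm{W}'}\le C(1+\mathcal{W}_\ep(t)^\theta)$, and Hölder in $t$ with exponents $1/\theta$ and $1/(1-\theta)$, combined with $\int_0^\infty e^{-t}\mathcal{W}_\ep\dt\le C$ (again \eqref{eq-levd}) and the finiteness of $\int_0^\infty e^{-t}t^{1/(1-\theta)}\dt$, makes $\int_0^\infty e^{-t}t\,\mathcal{W}_\ep^\theta\dt$ bounded independently of $\ep$. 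Summing, $|R(u_\ep)|\le C\sqrt\ep$, the dissipative term being responsible for the $\sqrt\ep$ rate.

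I expect the main obstacle to be the second step, namely the rigorous justification that $\sigma\mapsto J_\ep(u_\sigma)$ is genuinely differentiable at $0$ and that differentiation may be interchanged with the integrals for the nonlinear term $\mathcal{W}$ and the quadratic $\mathcal{G}$, given that $u_\ep$ is only known to lie in $H^2_{loc}$. The change-of-variables reformulation is exactly what removes the apparent need for a third time-derivative, and the required domination of the difference quotients comes from the finiteness of $J_\ep(u_\ep)$; this is the delicate part to handle (following \cite{ST2,TT}), whereas the residual estimate, once the identity is in hand, is comparatively routine.
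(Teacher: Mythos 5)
Your proposal follows essentially the same route as the paper: the paper takes $\varphi_\delta(t)=t-\delta g(t)$ and the competitor $U_\delta(t)=u_\ep(\varphi_\delta(t))+t\,\ep\,\delta\,g'(0)\,w_1$ (your $\Phi_\sigma$, correction term, and the change-of-variables computation are the same construction up to the sign of the parameter, with the derivative formulas quoted from \cite{ST2,TT}), and it estimates $R(u_\ep)$ exactly as you do, the dissipative piece via $|\langle\nabla\mathcal{G}(u_\ep'(t)),w_1\rangle|\le C\sqrt{\mathcal{G}_\ep(t)}$, Cauchy--Schwarz in $t$, and $\Delta_\ep(u_\ep)\le C$ from \eqref{eq-levd}, which is indeed the source of the $\sqrt{\ep}$ rate. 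No gaps; the approach and the resulting bounds on the three pieces of $R(u_\ep)$ coincide with the paper's.
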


\begin{proof}
The proof consists mainly in a combination between \cite[proof of Proposition 4.4]{ST2} and \cite[proof of Proposition 4.4]{TT}. For small $\delta$, let $\varphi_{\delta}(t):=t-\delta g(t)$ and $U_{\delta}(t):=u_{\ep}\big(\varphi_{\delta}(t)\big)+t\ep\delta g'(0)w_1$, which is an admissible competitor of $u_\ep$ in the minimization of $J_\ep$, since it satisfies \eqref{eq-bc}. Moreover, as $u_\ep$ is a minimizer and $U_{\delta=0}=u_\ep$,
\[
  \left.\frac{\partial}{\partial\delta}J_{\ep}(U_{\delta})\right|_{\delta=0}=\left.\frac{\partial}{\partial\delta}H_{\ep}(U_{\delta})\right|_{\delta=0}+\left.\frac{\partial}{\partial\delta}\Delta_{\ep}(U_{\delta})\right|_{\delta=0}-\left.\frac{\partial}{\partial\delta}S(U_{\delta})\right|_{\delta=0}=0.
\]
The first two derivatives (already computed in \cite[proof of Proposition 4.4]{ST2}) are given by
\begin{align*}
 \left.\frac{\partial}{\partial\delta}H_{\ep}(U_{\delta})\right|_{\delta=0}= & \, \int_0^\infty e^{-t}\,\left[\big(g'(t)-g(t)\big)\big(L_\ep(t)-\tfrac{1}{\ep}\mathcal{G}_\ep(t)\big)-4g'(t)D_\ep(t)-g''(t)K_\ep'(t)\right]\dt+\\[.2cm]
                                                                             & \, + \ep g'(0)\int_0^\infty e^{-t}\,t\,\left\langle\nabla\mathcal{W}\big(u_\ep(t)\big),w_1\right\rangle\dt
\end{align*}
and
\[
 \left.\frac{\partial}{\partial\delta}\Delta_{\ep}(U_{\delta})\right|_{\delta=0}=\int_0^\infty\frac{e^{t/\ep}}{\ep}\,\left[-\big(g'(t)+g(t)\big)\mathcal{G}_\ep(t)+\ep g'(0)\left\langle\nabla\mathcal{G}\big(u_\ep'(t)\big),w_1\right\rangle\right]\dt,
\]
while the last one was computed in \cite[proof of Proposition 4.4]{TT} and reads
\[
 \left.\frac{\partial}{\partial\delta}S(U_{\delta})\right|_{\delta=0}=\int_0^\infty e^{-t}\left[-g(t)\Phi_\ep(t)+\ep g'(0)t\big(\phi(t),w_1\big)_{L^2}\right]\dt.
\]
 Combining these results and suitably rearranging terms, one obtains \eqref{eq-relder} and \eqref{eq-resto}.

It remains to prove the estimate for $R(u_\ep)$. Arguing again as in \cite[proof of Proposition 4.4]{TT} one has
 \[
 \left|\int_0^{\infty}e^{-t}\,t\,\left\langle\nabla\mathcal{W}\big(u_{\ep}(t)\big),w_1\right\rangle\dt\right| \leq C(1+\ep),
 \]
 and
 \[
  \left|\int_0^{\infty}e^{-t}\,t\,\big(\phi(t),w_1\big)_{L^2}\dt\right|\leq C\ep,
 \]
 whereas, on the other hand, using the definition of $\mathcal{G}$, \eqref{eq-assogr} and \eqref{eq-levd},
 \[
  \left|\int_0^\infty e^{-t}\,\langle\nabla\mathcal{G}\big(u_\ep'(t)\big),w_1\rangle\dt\right|\leq C\int_0^\infty e^{-t}\,\sqrt{\mathcal{G}_\ep(t)}\dt\leq C\big(\ep\Delta_\ep(u_\ep)\big)^{1/2} \leq  C\sqrt{\ep}(1+\ep).
 \]
 Summing up, \eqref{eq-restest} is satisfied and this completes the proof.
\end{proof}

\begin{remark}
 Estimate \eqref{eq-restest} is weaker than the analogous for the nondissipative case (see \cite[Eq. (40)]{TT}) by a factor $\sqrt{\ep}$. However, we will prove that this does not affect the variational approach.
\end{remark}

Before showing the consequences of Proposition \ref{prop-jderd}, we have to recall the definition of the average operator (for details, we refer the reader to \cite[Section 3]{TT}).

\begin{definition}
 The \emph{average operator} is the operator that associates any measurable function $h:[0,\infty]\to[0,\infty]$ with the function $\mathcal{A}h$, defined by
 \begin{equation}\label{defA}
  \mathcal{A}h\,(t):=\int_t^{\infty}e^{-(s-t)}\,h(s)\ds,\qquad t\geq0.
 \end{equation}
\end{definition}

Note that, by an application of Fubini's Theorem,
 the iterated operator $\mathcal{A}^2$ can be represented as
\begin{equation}
\label{Aquadro}
 \mathcal{A}^2h\,(t):=\mathcal{A}(\mathcal{A}h)\,(t)=\int_t^{\infty}e^{-(s-t)}\,(s-t)\,h(s)\ds.
\end{equation}
Moreover,
$\mathcal{A}h\,(t)$ is well defined (and finite for every $t\geq 0$) even when $h$ is a changing sign function, provided that the quantity $\mathcal{A}|h|\,(0)$ is finite.

In addition, by virtue of \eqref{Aquadro}, 
condition \eqref{assAqprel} can be rewritten as 
\begin{equation}
 \label{assAq}
 \ep \int_0^t \mathcal{A}^2\|\phi(\cdot)\|_{L^2}^2\,(s)\,ds\leq\gamma(\ep t+t_\ep)+\ep^2,\qquad\forall t\geq 0.
\end{equation}

\begin{corollary}
Under the assumptions of Proposition \ref{prop-jderd}, there results
 \begin{equation}
  \label{eq-relzero}
  \mathcal{A}^2L_{\ep}\,(0)+4\mathcal{A}D_{\ep}\,(0)+\tfrac{2}{\ep}\mathcal{A}\mathcal{G}_\ep\,(0)-\mathcal{A}L_{\ep}\,(0)=\mathcal{A}^2\Phi_{\ep}\,(0)-R(u_{\ep})
 \end{equation}
 and
 \begin{equation}
  \label{eq-relt}
  \mathcal{A}^2L_{\ep}\,(t)+4\mathcal{A}D_{\ep}\,(t)+\tfrac{2}{\ep}\mathcal{A}\mathcal{G}_\ep\,(t)-\mathcal{A}L_{\ep}\,(t)=\mathcal{A}^2\Phi_{\ep}\,(t)-K_{\ep}'(t),\qquad\mbox{for a.e. } t>0.
 \end{equation}
\end{corollary}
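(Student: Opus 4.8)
The plan is to obtain both \eqref{eq-relzero} and \eqref{eq-relt} directly from the variational identity \eqref{eq-relder}, by inserting two carefully chosen test functions $g$ and reading off the average operators through their pointwise representations. Recall from \eqref{defA} and \eqref{Aquadro} that, for any $\tau\geq0$, $\mathcal{A}h\,(\tau)=e^{\tau}\int_\tau^\infty e^{-s}h(s)\ds$ and $\mathcal{A}^2h\,(\tau)=e^{\tau}\int_\tau^\infty e^{-s}(s-\tau)h(s)\ds$; in particular $\mathcal{A}h\,(0)=\int_0^\infty e^{-s}h\ds$ and $\mathcal{A}^2h\,(0)=\int_0^\infty e^{-s}s\,h\ds$. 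Thus the whole task reduces to matching the integrands in \eqref{eq-relder} against these weights. Throughout, the a priori bound \eqref{eq-levd} (which, by the definitions \eqref{eq-W}--\eqref{eq-kinen}, gives $\int_0^\infty e^{-t}L_\ep\dt\leq C$) together with the compact support of $\phi$ from \eqref{suppcomt} is what guarantees finiteness of the $\mathcal{A}$- and $\mathcal{A}^2$-quantities and the convergence of the approximations needed to legitimate the (formally non-admissible) choices of $g$ below.

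For \eqref{eq-relzero} I would take $g(t)=t$. This $g$ is not constant for large $t$, so it is not directly admissible in Proposition \ref{prop-jderd}; I would therefore approximate it by $g_N\in C^2$ with $g_N(t)=t$ on $[0,N]$ and $g_N$ constant beyond $N+1$, write \eqref{eq-relder} for $g_N$, and let $N\to\infty$. Since $g_N'(0)=1$ and $g_N''$ is supported in $[N,N+1]$ (where it is integrated against the decaying $e^{-t}K_\ep'$), the limit retains exactly the contribution of $g(t)=t$, $g'(t)=1$, $g''(t)=0$. Substituting into \eqref{eq-relder} and using $\int_0^\infty e^{-t}(1-t)L_\ep\dt=\mathcal{A}L_\ep\,(0)-\mathcal{A}^2L_\ep\,(0)$, $\int_0^\infty e^{-t}t\,\Phi_\ep\dt=\mathcal{A}^2\Phi_\ep\,(0)$, $4\int_0^\infty e^{-t}D_\ep\dt=4\mathcal{A}D_\ep\,(0)$ and $\tfrac2\ep\int_0^\infty e^{-t}\mathcal{G}_\ep\dt=\tfrac2\ep\mathcal{A}\mathcal{G}_\ep\,(0)$, a simple rearrangement yields \eqref{eq-relzero}, the right-hand side $g'(0)R(u_\ep)=R(u_\ep)$ supplying the residual term \eqref{eq-resto}.

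For \eqref{eq-relt} I would fix $\tau>0$ and take $g(t)=(t-\tau)^+$, again via a $C^2$ approximation $g_\eta$ whose second derivative is a nonnegative approximate identity concentrating at $\tau$ (plus a harmless leveling-off at infinity, contributing $\sim e^{-N}K_\ep'(N)\to0$). Here $g(0)=g'(0)=0$, so the residual term drops out; moreover $g'=\chi_{[\tau,\infty)}$ and $g''\to\delta_\tau$, so that $-\int_0^\infty e^{-t}g''K_\ep'\dt\to -e^{-\tau}K_\ep'(\tau)$. Computing the remaining integrals over $[\tau,\infty)$ and identifying them, via the representations above, as $e^{-\tau}\mathcal{A}L_\ep\,(\tau)$, $e^{-\tau}\mathcal{A}^2L_\ep\,(\tau)$, $e^{-\tau}\mathcal{A}^2\Phi_\ep\,(\tau)$, $e^{-\tau}\mathcal{A}D_\ep\,(\tau)$ and $e^{-\tau}\mathcal{A}\mathcal{G}_\ep\,(\tau)$, I would divide by $e^{-\tau}$ and rearrange to obtain precisely \eqref{eq-relt}.

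The main obstacle is the rigorous justification of these two limiting procedures, and in particular the term carrying $K_\ep'$. Since $D_\ep$ and $L_\ep$ — hence $K_\ep'$ — are only defined almost everywhere, the convergence $\int_0^\infty e^{-t}g_\eta''K_\ep'\dt\to e^{-\tau}K_\ep'(\tau)$ holds only at Lebesgue points of $e^{-t}K_\ep'(t)$, which is exactly why \eqref{eq-relt} is asserted for a.e. $t>0$ while all the other ($\mathcal{A}$- and $\mathcal{A}^2$-) terms are continuous in $\tau$. I would therefore verify that $K_\ep'\in L^1_{loc}([0,\infty))$ (via $|K_\ep'|\leq\ep^{-2}\|u_\ep'\|_{L^2}\|u_\ep''\|_{L^2}$ and the bounds on the minimizer) and, for the $g(t)=t$ argument, that the weighted integrals such as $\int_0^\infty e^{-t}t\,L_\ep\dt=\mathcal{A}^2L_\ep\,(0)$ are indeed finite, so that the truncation limit exists. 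By contrast, the purely algebraic bookkeeping — matching each integrand to the correct power of $e^{-t}$ and to the factor $s-\tau$ — is routine once the representations of $\mathcal{A}$ and $\mathcal{A}^2$ are in hand.
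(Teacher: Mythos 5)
Your proposal is correct and follows essentially the same route as the paper's proof: \eqref{eq-relzero} is obtained by taking $g(t)=t$ in \eqref{eq-relder} via an approximation by admissible functions with $g_k'(0)=1$, and \eqref{eq-relt} by taking $g(s)=(s-t)^+$, again justified by approximation, with the $\Phi_\ep$ term controlled through \eqref{suppcomt}, \eqref{eq-assogr} and dominated convergence. Your additional remarks on the Lebesgue-point origin of the ``a.e.'' qualifier and on the integrability of $K_\ep'$ are consistent with the references the paper delegates these details to.
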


\begin{proof}
Formally, \eqref{eq-relzero} is obtained choosing $g(t)=t$ in \eqref{eq-relder} (and rewriting the resulting integrals
with the aid of \eqref{defA} and \eqref{Aquadro}). However, since the function $g(t)=t$ does not satisfy the assumptions of Proposition \ref{prop-jderd}, it is necessary to approximate it from below by a suitable sequence $(g_k)$ fulfilling the requirements of Proposition \ref{prop-jderd} and such that $g_k'(0)=1$, and pass to the limit in \eqref{eq-relder}. This can be done arguing exactly as in \cite[proof of Corollary 4.5]{ST2} and managing the term involving $\Phi_\ep$ as suggested by \cite[proof of Corollary 4.5]{TT} (namely, using \eqref{suppcomt}, \eqref{eq-assogr} and dominated convergence).

Concerning \eqref{eq-relt}, the proof follows exactly \cite[proof of Corollary 4.7]{ST2} (namely, one lets $g(s)=(s-t)^+$ in \eqref{eq-relder}, 
which can be justified by an approximation argument, treating again the term with $\Phi_\ep$ as in \cite[proof of Corollary 4.5]{TT}).
\end{proof}


\section{Energy estimates}

Following \cite{ST2,TT}, the next step is the study of the approximate energy. However, as we pointed out in Section \ref{sec-intro}, here we actually investigate a variant of this quantity which takes into account the effect of the dissipation.

Recall that the approximate energy associated with a minimizer $u_{\ep}$ of $J_{\ep}$ (obtained by Proposition \ref{prop-mind}) is given by
\[
  E_{\ep}(t):=\frac{1}{2\ep^2}\int_{\R^n}|u_{\ep}'(t,x)|^2\dx+\int_t^{\infty}e^{-(s-t)}(s-t)\,\mathcal{W}\big(u_{\ep}(s)\big)\ds.
 \]
 Then, we define its dissipative correction as follows.
\begin{definition}
 The \emph{dissipative (correction of the) approximate energy} is defined as
 \begin{equation}
  \label{eq-dappenesp}
  E_{\ep}^d(t):=E_{\ep}(t)+\frac{1}{\ep}\int_0^t\int_s^{\infty}e^{-(\tau-s)}(1+\tau-s)\,\mathcal{G}(u_\ep'(\tau))\dtau\ds.
 \end{equation}
 \qed
\end{definition}

\begin{remark}
 Note that, by \eqref{eq-W}, \eqref{eq-G} and \eqref{eq-kinen}, \eqref{eq-dappenesp} reads
 \begin{equation}
  \label{eq-dappenesp_comp}
  E_{\ep}^d(t)=\underbrace{K_{\ep}(t)+\mathcal{A}^2\mathcal{W}_{\ep}\,(t)}_{:=E_\ep(t)}+\frac{1}{\ep}\int_0^t\big(\mathcal{A}\mathcal{G}_\ep\,(s)+\mathcal{A}^2\mathcal{G}_\ep\,(s)\big)\ds.
 \end{equation}
 In addition, we observe that $E_\ep^d(0)=E_\ep(0)$ and that, in view of \eqref{defead}, $\mathcal{E}_\ep^d(t)=E_\ep^d(t/\ep)$.
\end{remark}

Now, exploiting the fact that $E_\ep^d\geq E_\ep$ and arguing as in \cite{TT}, we can establish an upper bound for the time evolution of the approximate energy. First, we estimate its value at $t=0$.

\begin{lemma}[Estimate for $E_{\ep}^d(0)$]
 We have that
 \begin{equation}
  \label{eq-appenzero}
  \Lambda_\ep:=E_{\ep}^d(0)+\frac{1}{\ep}\big(\mathcal{A}^2\mathcal{G}_\ep\,(0)+2\mathcal{A}\mathcal{G}_\ep\,(0)\big)\leq\frac{1}{2}\|w_1\|_{L^2}^2+\mathcal{W}(w_0)+C\sqrt{\ep}.
 \end{equation}
\end{lemma}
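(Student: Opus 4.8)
The plan is to reduce $\Lambda_\ep$ to a combination of average operators evaluated at $0$ and then exploit the stationarity identity \eqref{eq-relzero}. First I would record that, by the boundary condition \eqref{eq-bc}, $u_\ep'(0)=\ep w_1$, so $K_\ep(0)=\tfrac12\|w_1\|_{L^2}^2$; since $E_\ep^d(0)=E_\ep(0)=K_\ep(0)+\mathcal{A}^2\mathcal{W}_\ep\,(0)$ by \eqref{eq-dappenesp_comp}, the definition of $\Lambda_\ep$ becomes $\Lambda_\ep=\tfrac12\|w_1\|_{L^2}^2+\mathcal{A}^2\mathcal{W}_\ep\,(0)+\tfrac1\ep\mathcal{A}^2\mathcal{G}_\ep\,(0)+\tfrac2\ep\mathcal{A}\mathcal{G}_\ep\,(0)$. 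Using $L_\ep=D_\ep+\mathcal{W}_\ep+\tfrac1\ep\mathcal{G}_\ep$ from \eqref{eq-L}, the three averaged terms may be rewritten as $\mathcal{A}^2L_\ep\,(0)-\mathcal{A}^2D_\ep\,(0)+\tfrac2\ep\mathcal{A}\mathcal{G}_\ep\,(0)$, and substituting the relation \eqref{eq-relzero} for $\mathcal{A}^2L_\ep\,(0)+\tfrac2\ep\mathcal{A}\mathcal{G}_\ep\,(0)$ yields the exact identity $\Lambda_\ep-\tfrac12\|w_1\|_{L^2}^2=\mathcal{A}L_\ep\,(0)-4\mathcal{A}D_\ep\,(0)-\mathcal{A}^2D_\ep\,(0)+\mathcal{A}^2\Phi_\ep\,(0)-R(u_\ep)$.

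At this point I would simply discard the two acceleration contributions: since $D_\ep\geq0$ one has $-4\mathcal{A}D_\ep\,(0)\leq0$ and $-\mathcal{A}^2D_\ep\,(0)\leq0$, so it remains to bound $\mathcal{A}L_\ep\,(0)+|\mathcal{A}^2\Phi_\ep\,(0)|+|R(u_\ep)|$. The first term is controlled for free by the level estimate: observing that $H_\ep(u_\ep)+\Delta_\ep(u_\ep)=\mathcal{A}L_\ep\,(0)$, estimate \eqref{eq-levd} gives $\mathcal{A}L_\ep\,(0)\leq\mathcal{W}(w_0)+C\ep$. The residual term is handled directly by \eqref{eq-restest}, namely $|R(u_\ep)|\leq C\sqrt\ep$.

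The main obstacle is the source-coupling term $\mathcal{A}^2\Phi_\ep\,(0)=\int_0^\infty e^{-s}\,s\,(\phi(s),u_\ep'(s))_{L^2}\ds$, which must be shown to be $O(\sqrt\ep)$. Here I would crucially use the compact support \eqref{suppcomt}: since $\phi(s)=0$ for $s>T^*$ with $T^*\leq\ep^{-3/2}$, the integral runs over $[0,T^*]$, on which $s\leq T^*$; bounding pointwise by Cauchy--Schwarz in $L^2(\R^n)$ and then applying Cauchy--Schwarz in time against the weight $e^{-s}\ds$ gives $|\mathcal{A}^2\Phi_\ep\,(0)|\leq T^*\|\phi\|_\LL\|u_\ep'\|_\LL$. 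Now \eqref{eq-assogr} provides $\|\phi\|_\LL\leq\ep$, while Lemma \ref{lem-poincare} together with $u_\ep'(0)=\ep w_1$ and the bound $\|u_\ep''\|_\LL\leq C\ep$ (established in the proof of Proposition \ref{prop-mind}) yields $\|u_\ep'\|_\LL\leq C\ep$. Hence $|\mathcal{A}^2\Phi_\ep\,(0)|\leq\ep^{-3/2}\cdot\ep\cdot C\ep=C\sqrt\ep$, the precise power $-3/2$ in the support bound being exactly what converts the $\ep^2$ smallness of $\|\phi\|_\LL\|u_\ep'\|_\LL$ into the target rate $\sqrt\ep$. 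Collecting the three estimates and absorbing $C\ep\leq C\sqrt\ep$ for $\ep\in(0,1)$ gives $\Lambda_\ep\leq\tfrac12\|w_1\|_{L^2}^2+\mathcal{W}(w_0)+C\sqrt\ep$, which is \eqref{eq-appenzero}.
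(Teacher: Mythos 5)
Your proof is correct and follows essentially the same route as the paper's: reduce via \eqref{eq-relzero}, drop the nonnegative $D_\ep$ contributions, control $\mathcal{A}L_\ep\,(0)$ by the level estimate \eqref{eq-levd}, $R(u_\ep)$ by \eqref{eq-restest}, and $\mathcal{A}^2\Phi_\ep\,(0)$ by combining \eqref{suppcomt}, \eqref{eq-assogr} and the bound $\|u_\ep'\|_\LL\leq C\ep$ from Lemma \ref{lem-poincare}. The only difference is that you spell out the Cauchy--Schwarz derivation of $|\mathcal{A}^2\Phi_\ep\,(0)|\leq T^*\|\phi\|_\LL\|u_\ep'\|_\LL\leq C\sqrt\ep$, which the paper delegates to a citation of \cite{TT}.
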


\begin{proof}
 From \eqref{eq-bc}, $E_{\ep}^d(0)=E_{\ep}(0)=\frac{1}{2}\|w_1\|_{L^2}^2+\mathcal{A}^2\mathcal{W}_{\ep}\,(0)$. Since $\mathcal{A}^2\mathcal{W}_{\ep}\,(0)\leq \mathcal{A}^2L_{\ep}\,(0)-\frac{1}{\ep}\mathcal{A}^2\mathcal{G}_\ep\,(0)$, from \eqref{eq-relzero} we obtain
 \[
  \mathcal{A}^2\mathcal{W}_{\ep}\,(0)\leq\mathcal{A}^2\Phi_{\ep}\,(0)+\mathcal{A}L_{\ep}\,(0)-R(u_{\ep})-\frac{1}{\ep}\big(\mathcal{A}^2\mathcal{G}_\ep\,(0)+2\mathcal{A}\mathcal{G}_\ep\,(0)\big).
 \]
 Recalling that $\mathcal{A}L_{\ep}\,(0)=H_{\ep}(u_{\ep})$ and combining with \eqref{eq-levd} and \eqref{eq-restest},
 \begin{equation}
  \label{eq-a_parz}
  \mathcal{A}^2\mathcal{W}_{\ep}\,(0)\leq \mathcal{A}^2\Phi_{\ep}\,(0)-\frac{1}{\ep}\big(\mathcal{A}^2\mathcal{G}_\ep\,(0)+2\mathcal{A}\mathcal{G}_\ep\,(0)\big)+\mathcal{W}(w_0)+C\sqrt{\ep}.
 \end{equation}
 Moreover, arguing as in \cite[proof of Lemma 5.3]{TT}, 
 from \eqref{suppcomt} and\eqref{eq-assogr} we obtain
 \[
  |\mathcal{A}^2\Phi_{\ep}\,(0)|\leq C \frac{\|u_{\ep}'\|_\LL}{\sqrt{\ep}},
 \]
 while from Lemma \ref{lem-poincare}, \eqref{eq-bc} and \eqref{eq-levd} we
 obtain
 \[
  \|u_{\ep}'\|_\LL^2\leq C\ep^2.
 \]
 Hence $|\mathcal{A}^2\Phi_{\ep}\,(0)|\leq C\sqrt{\ep}$, so that, suitably rearranging terms in \eqref{eq-a_parz}, \eqref{eq-appenzero} is proved.
\end{proof}

Now we are ready to establish an estimate for $E_{\ep}^d(t/\ep)$. This will be obtained as a consequence of \eqref{suppcomt}, \eqref{eq-assogr} and \eqref{assAq}, using \cite[Lemma 5.6]{TT}, a Gr\"onwall-type
 lemma that claims the following:
 if
\begin{itemize}
 \item[1)] $c:[a,b]\to\R$ is a positive, differentiable and nondecreasing function;
 \item[2)] $u$ and $v$ are two nonnegative functions such that $u\in C^0([a,b])$ and $v\in L^1([a,b])$;
 \item[3)] $c$, $u$ and $v$ satisfy $u(t)\leq c^2(t)+2\int_a^tv(s)\sqrt{u(s)}\ds$, for every $t\in[a,b]$;
\end{itemize}
then
\begin{equation}
\label{eq-gronwall}
 \sqrt{ u(t)}\leq c(t)+\int_a^tv(s)\ds,\qquad\forall t\in[a,b].
 \end{equation}

\begin{proposition}[Dissipative approximate energy estimate]
 \label{prop-appest}
 For every $\beta>1$, there exists a constant $C_{\beta}>0$ such that for every $T\geq 0$
\begin{equation}
  \label{stimalocT}
   \sqrt{ E_{\ep}^d(T/\ep)}\leq \sqrt{E_{\ep}^d(0)}+\left(\sqrt{\ep C_\beta}+\sqrt{T \beta/2}\right)\sqrt{\gamma(T+t_\ep)+\ep^2},\qquad\forall \ep\in (0,1).
\end{equation}
In particular, for every $T\geq 0$ there exists $C_T$  such that
\begin{equation}
  \label{stimalocT2}
   E_{\ep}^d(t/\ep)\leq C_T,\qquad\forall \ep\in (0,1),\quad \forall t\in [0,T].
\end{equation}
\end{proposition}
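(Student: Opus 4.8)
The plan is to derive a differential inequality for the quantity $\sqrt{E_\ep^d(t/\ep)}$ and then close it using the Grönwall-type lemma quoted from \cite[Lemma 5.6]{TT}. The target estimate \eqref{stimalocT} has exactly the structure produced by that lemma: a ``base value'' $\sqrt{E_\ep^d(0)}$ plus an integral contribution governed by $\gamma$. So the whole proof reduces to writing $E_\ep^d(t/\ep)$ in the form $u(t)\le c^2(t)+2\int_0^t v(s)\sqrt{u(s)}\,ds$ required by hypothesis 3) of the lemma, with an appropriate identification of $u$, $c$ and $v$.

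**First** I would differentiate (or take an increment of) $E_\ep^d$ in the rescaled time $t/\ep$. The natural starting point is the pointwise relation \eqref{eq-relt}, which after rearrangement expresses $K_\ep'(t)$ in terms of $\mathcal{A}^2\Phi_\ep(t)$ and the dissipative combination $\mathcal{A}^2L_\ep(t)-\mathcal{A}L_\ep(t)+4\mathcal{A}D_\ep(t)+\tfrac{2}{\ep}\mathcal{A}\mathcal{G}_\ep(t)$. Using the representation \eqref{eq-dappenesp_comp} of $E_\ep^d$, the idea is that the dissipative terms $\tfrac{1}{\ep}(\mathcal{A}\mathcal{G}_\ep+\mathcal{A}^2\mathcal{G}_\ep)$ appearing in the time-derivative of $E_\ep^d$ are precisely matched (in sign) by the corresponding terms coming from \eqref{eq-relt}, so that they cancel or contribute with a favorable sign. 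What should survive, after integrating over $[0,t]$, is a contribution bounded by the forcing: an integral involving $\mathcal{A}^2\Phi_\ep$, which by Cauchy--Schwarz is controlled by $\|u_\ep'(\cdot)\|_{L^2}$ (hence by $\sqrt{K_\ep}$, hence by $\sqrt{E_\ep^d}$) times a factor built from $\|\phi\|_{L^2}$.

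**The key estimate** is therefore the bound on the source contribution. Writing $\Phi_\ep(s)=(\phi(s),u_\ep'(s))_{L^2}$ and applying Cauchy--Schwarz in the inner product, one gets a factor $\sqrt{K_\ep(s)}\le\sqrt{E_\ep^d(s)}$ against a factor controlled by $\mathcal{A}^2\|\phi(\cdot)\|_{L^2}^2$. This is exactly where hypothesis \eqref{assAq} enters: integrating the $\phi$-factor over $[0,t]$ produces $\gamma(\ep t+t_\ep)+\ep^2$, which after the change of variables $t\mapsto T$ gives the $\gamma(T+t_\ep)+\ep^2$ appearing in \eqref{stimalocT}. One then chooses $c^2(t):=E_\ep^d(0)+(\text{small }\ep\text{-error})$ and $v(s)$ proportional to the square root of the $\phi$-integrand; the constant $C_\beta$ and the factor $\sqrt{T\beta/2}$ arise from splitting the quadratic forcing bound via Jensen's inequality (the $\beta>1$ being the usual slack parameter when passing from an $L^1$-in-time estimate to the pointwise Grönwall form). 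Applying \eqref{eq-gronwall} then yields \eqref{stimalocT} directly.

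**The main obstacle** I anticipate is the bookkeeping in the first step: verifying that the dissipative terms generated by differentiating the second summand of \eqref{eq-dappenesp_comp} combine correctly with those in \eqref{eq-relt} so that no uncontrolled $\tfrac{1}{\ep}$-singular dissipation term is left over. This is the genuinely new point relative to \cite{TT} (where $\mathcal{G}\equiv0$), and it is precisely why $E_\ep^d$, rather than $E_\ep$, is the right object: the definition \eqref{eq-dappenesp} is engineered so that the dissipative contributions telescope. Once this cancellation is confirmed, \eqref{stimalocT2} follows immediately by taking $t\in[0,T]$, using $E_\ep^d(0)\le C$ from \eqref{eq-appenzero}, the fact that $\gamma$ is locally bounded (continuous), and $t_\ep\to0$, so the right-hand side of \eqref{stimalocT} stays bounded uniformly in $\ep\in(0,1)$.
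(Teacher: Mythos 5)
Your overall strategy is the same as the paper's: differentiate $E_\ep^d$ using \eqref{eq-relt}, observe that the $\tfrac{1}{\ep}\mathcal{G}_\ep$ terms generated by the second summand of \eqref{eq-dappenesp_comp} cancel exactly against those coming from \eqref{eq-relt} (this cancellation is indeed the raison d'\^etre of $E_\ep^d$, and your reading of it is correct), bound the surviving source term, and close with the Gr\"onwall lemma \eqref{eq-gronwall} together with \eqref{assAq}. However, there is a genuine gap in your ``key estimate''. After the cancellation one is left with
\[
\frac{d}{dt}E_\ep^d(t)=-3\mathcal{A}D_{\ep}\,(t)-\mathcal{A}^2D_{\ep}\,(t)+\mathcal{A}^2\Phi_{\ep}\,(t),
\]
and Cauchy--Schwarz applied to $\mathcal{A}^2\Phi_\ep\,(t)=\int_t^\infty e^{-(s-t)}(s-t)\big(\phi(s),u_\ep'(s)\big)_{L^2}\,ds$ does \emph{not} produce a factor $\sqrt{K_\ep(t)}\le\sqrt{E_\ep^d(t)}$ as you claim: it produces $\big(\mathcal{A}^2\|u_\ep'(\cdot)\|_{L^2}^2\,(t)\big)^{1/2}$, i.e.\ the square root of the \emph{forward-averaged} kinetic energy over all times $s\ge t$. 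This is precisely the acausality problem emphasized in the introduction, and the forward average is not comparable to the instantaneous energy without further input; in particular hypothesis 3) of the Gr\"onwall lemma, which requires $\sqrt{u(s)}=\sqrt{E_\ep^d(s)}$ under the integral, is not yet verified. The paper resolves this via \cite[Lemma 3.4]{TT}, which controls $\mathcal{A}^2\|u_\ep'(\cdot)\|_{L^2}^2\,(t)$ by $2\beta\ep^2E_\ep(t)$ plus a multiple of the $D_\ep$-averages, and the leftover negative terms $-3\mathcal{A}D_\ep-\mathcal{A}^2D_\ep$ (which your proposal never mentions) are exactly what absorbs that remainder after an application of Young's inequality. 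This is also the true origin of the slack parameter $\beta>1$ and of the constant $C_\beta$; your attribution of $\beta$ to a Jensen-type step in the Gr\"onwall argument is not where it comes from.

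Apart from this, the rest of your plan is sound and matches the paper: the $\phi$-factor $\big(\mathcal{A}^2\|\phi(\cdot)\|_{L^2}^2\,(s)\big)^{1/2}$ integrated over $[0,t]$ is converted into $\gamma(\ep t+t_\ep)+\ep^2$ via \eqref{assAq}, the choice of $c$ and $v$ in \eqref{eq-gronwall} is as you indicate, and \eqref{stimalocT2} follows from \eqref{stimalocT} together with \eqref{eq-appenzero}, the monotonicity of the right-hand side in $T$, and $t_\ep\downarrow0$. So the missing ingredient is a single but essential one: the mechanism (the second-derivative terms $D_\ep$ plus \cite[Lemma 3.4]{TT}) that converts the acausal average of the kinetic energy into the instantaneous energy appearing in the Gr\"onwall hypothesis.
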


\begin{proof}
 First, using the properties of $\mathcal{A}$ (precisely, \cite[Eq. (47)]{ST2}), \eqref{eq-L} and \eqref{eq-relt}, there results
 \[
  \frac{d}{dt}E_\ep^d(t)=-3\mathcal{A}D_{\ep}\,(t)-\mathcal{A}^2D_{\ep}\,(t)+\mathcal{A}^2\Phi_{\ep}\,(t),\qquad\mbox{for a.e. }t\geq0.
 \]
 Therefore, recalling the definitions of $\mathcal{A}^2$ and $\Phi_{\ep}$ and observing that $e^{-(s-t)}(s-t)$ is a probability kernel on $[t,\infty)$, we have
 \[
  \frac{d}{dt}E_\ep^d(t)\leq-3\mathcal{A}D_{\ep}\,(t)-\mathcal{A}^2D_{\ep}\,(t)+N_{\phi}(t)\left(\mathcal{A}^2\|u_{\ep}'(\cdot)\|_{L^2}^2\,(t)\right)^{1/2}
 \]
 where $N_{\phi}(t)=\left(\mathcal{A}^2\|\phi(\cdot)\|_{L^2}^2\,(t)\right)^{1/2}$. Now, using \cite[Lemma 3.4]{TT} and Young's inequality, one may check that
\[
  \frac{d}{dt}E_\ep^d(t)\leq\sqrt{2\beta}\,\ep N_{\phi}(t) \sqrt{E_\ep(t)}+C_{\beta}\ep^2 N_{\phi}(t)^2
\]
and, integrating on $[0,t]$,
 \[
  E_{\ep}^d(t)\leq E_{\ep}^d(0)+C_{\beta}\ep^2\int_0^tN_{\phi}^2(s)\ds
  +\sqrt{2\beta}\,\ep\int_0^t N_{\phi}(s)\sqrt{E_{\ep}(s)}\ds.
 \]
 As a consequence, if we set
 \[
  u(t)=E_{\ep}^d(t),\qquad v(t)=\ep \sqrt{\beta/2} N_{\phi}(t),\qquad
  c(t)^2=E_{\ep}(0)+C_{\beta}\,\ep^2\int_0^tN_{\phi}^2(s)\ds,
 \]
 then \eqref{eq-gronwall} yields
 \[
  \sqrt{E_{\ep}^d(t)}\leq \left(E_{\ep}^d(0)+C_{\beta}\ep^2\int_0^tN_{\phi}^2(s)\ds\right)^{1/2}
  +\ep \sqrt{\beta/2}\int_0^t N_{\phi}(s)\ds
 \]
 and thus, using Cauchy-Schwarz, with some computations one obtains
 \[
 \sqrt{ E_{\ep}^d(t)}\leq \sqrt{E_{\ep}^d(0)}+\sqrt{\ep}\left(\sqrt{C_\beta}+\sqrt{t\beta/2}\right)\sqrt{\ep\int_0^t\mathcal{A}^2\|\phi(\cdot)\|_{L^2}^2\,(s)\,ds}.
 \]
 Hence \eqref{stimalocT} follows exploiting \eqref{assAq} and setting $t=T/\ep$. Finally, one easily gets \eqref{stimalocT2}, observing that the right hand side of \eqref{stimalocT} is increasing with respect to $T$, that $t_\ep\downarrow0$, that $\beta$ can be fixed (e.g. $\beta=2$) and that
$E_\ep^d(0)\leq C$ by \eqref{eq-appenzero}.
\end{proof}

Proposition \ref{prop-appest} has an immediate consequence, which is crucial for proving \eqref{eq-aprdued}, \eqref{eq-aprtred} and \eqref{eq-enineqd}.

\begin{corollary}
 \label{corl-appest}
 For every $\beta>1$, there exists a constant $C_{\beta}>0$ such that for every $T\geq 0$
\begin{equation}
  \label{stimalocTbis}
   E_{\ep}(T/\ep)+\frac{2}{\ep}\int_0^{T/\ep}\mathcal{G}_\ep(t)\dt\leq \left(\sqrt{\Lambda_\ep}+\left(\sqrt{\ep C_\beta}+\sqrt{T \beta/2}\right)\sqrt{\gamma(T+t_\ep)+\ep^2}\right)^2,\qquad\forall \ep\in (0,1).
\end{equation}
In particular, for every $T\geq 0$ there exists $C_T$  such that
\begin{equation}
  \label{stimalocT2bis}
   E_{\ep}(t/\ep)+\frac{2}{\ep}\int_0^{t/\ep}\mathcal{G}_\ep(s)\ds\leq C_T,\qquad\forall \ep\in (0,1),\quad \forall t\in [0,T].
\end{equation}
\end{corollary}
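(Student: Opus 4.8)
The plan is to deduce the estimate directly from Proposition \ref{prop-appest} by rewriting the dissipative energy $E_\ep^d$ so that the target combination $E_\ep(t)+\tfrac{2}{\ep}\int_0^t\mathcal{G}_\ep$ appears explicitly, up to boundary terms that can be controlled or discarded. The key tool is the pair of first-order relations satisfied by the average operator, namely $\tfrac{d}{dt}\mathcal{A}h=\mathcal{A}h-h$ and $\tfrac{d}{dt}\mathcal{A}^2h=\mathcal{A}^2h-\mathcal{A}h$, which are exactly the ``properties of $\mathcal{A}$'' already invoked in the proof of Proposition \ref{prop-appest} (see \cite[Eq.~(47)]{ST2}).

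First I would integrate these two relations on $[0,t]$ and add them, obtaining the identity
\begin{equation}
\label{plan-identity}
\int_0^t\big(\mathcal{A}\mathcal{G}_\ep+\mathcal{A}^2\mathcal{G}_\ep\big)(s)\ds=2\int_0^t\mathcal{G}_\ep(s)\ds+2\mathcal{A}\mathcal{G}_\ep\,(t)+\mathcal{A}^2\mathcal{G}_\ep\,(t)-\big(2\mathcal{A}\mathcal{G}_\ep\,(0)+\mathcal{A}^2\mathcal{G}_\ep\,(0)\big).
\end{equation}
Inserting \eqref{plan-identity} into the expression \eqref{eq-dappenesp_comp} for $E_\ep^d$ and recalling the definition of $\Lambda_\ep$ in \eqref{eq-appenzero}, I would rearrange to
\[
E_\ep(t)+\frac{2}{\ep}\int_0^t\mathcal{G}_\ep(s)\ds=E_\ep^d(t)-E_\ep^d(0)+\Lambda_\ep-\frac{1}{\ep}\big(2\mathcal{A}\mathcal{G}_\ep\,(t)+\mathcal{A}^2\mathcal{G}_\ep\,(t)\big).
\]
Since $\mathcal{G}\ge0$ forces $\mathcal{G}_\ep\ge0$, and hence $\mathcal{A}\mathcal{G}_\ep\,(t),\,\mathcal{A}^2\mathcal{G}_\ep\,(t)\ge0$, the last term may simply be dropped, giving the clean bound
\begin{equation}
\label{plan-clean}
E_\ep(t)+\frac{2}{\ep}\int_0^t\mathcal{G}_\ep(s)\ds\le E_\ep^d(t)-E_\ep^d(0)+\Lambda_\ep,\qquad\forall t\ge0.
\end{equation}

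Next I would take $t=T/\ep$ in \eqref{plan-clean} and control $E_\ep^d(T/\ep)$ via Proposition \ref{prop-appest}. Writing $X:=\big(\sqrt{\ep C_\beta}+\sqrt{T\beta/2}\big)\sqrt{\gamma(T+t_\ep)+\ep^2}$ and squaring \eqref{stimalocT}, one has $E_\ep^d(T/\ep)\le\big(\sqrt{E_\ep^d(0)}+X\big)^2$, so that $E_\ep^d(T/\ep)-E_\ep^d(0)\le2\sqrt{E_\ep^d(0)}\,X+X^2$. The only remaining point is to upgrade $\sqrt{E_\ep^d(0)}$ to $\sqrt{\Lambda_\ep}$ in the cross term: this is immediate since, by the very definition \eqref{eq-appenzero}, $\Lambda_\ep=E_\ep^d(0)+\tfrac{1}{\ep}\big(\mathcal{A}^2\mathcal{G}_\ep\,(0)+2\mathcal{A}\mathcal{G}_\ep\,(0)\big)\ge E_\ep^d(0)\ge0$, whence $\sqrt{E_\ep^d(0)}\le\sqrt{\Lambda_\ep}$. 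Plugging back into \eqref{plan-clean} yields
\[
E_\ep(T/\ep)+\frac{2}{\ep}\int_0^{T/\ep}\mathcal{G}_\ep(s)\ds\le\Lambda_\ep+2\sqrt{\Lambda_\ep}\,X+X^2=\big(\sqrt{\Lambda_\ep}+X\big)^2,
\]
which is precisely \eqref{stimalocTbis}. Then \eqref{stimalocT2bis} follows exactly as at the end of the proof of Proposition \ref{prop-appest}: the right-hand side of \eqref{stimalocTbis} is nondecreasing in $T$, $\Lambda_\ep$ is bounded uniformly in $\ep$ by \eqref{eq-appenzero}, $t_\ep\downarrow0$, and $\beta$ may be fixed (e.g.\ $\beta=2$).

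I expect the only genuinely delicate step to be the bookkeeping behind identity \eqref{plan-identity}: justifying the integration of the two differential relations for $\mathcal{A}$ at the available regularity of $\mathcal{G}_\ep$ (which is built from $u_\ep'$, defined only a.e.), and pairing the boundary terms at $t$ and at $0$ so that $\Lambda_\ep$ — rather than a stray combination of averages — emerges after rearrangement. Once \eqref{plan-identity} is secured, the rest is the elementary ``square and discard nonnegative terms'' argument together with the inequality $E_\ep^d(0)\le\Lambda_\ep$, and no analytic difficulty arises beyond what Proposition \ref{prop-appest} already supplies.
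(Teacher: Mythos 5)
Your proposal is correct and follows essentially the same route as the paper: the identity \eqref{plan-identity} is exactly the one the paper imports from \cite[Lemma 3.2]{TT}, and the subsequent rearrangement, the discarding of the nonnegative boundary terms at time $t$, the use of $E_\ep^d(0)\le\Lambda_\ep$ to upgrade the cross term, and the combination with Proposition \ref{prop-appest} all match the paper's argument. The only difference is that you rederive the averaging identity from the differential relations for $\mathcal{A}$ rather than citing it, which is a harmless (and in fact clarifying) addition.
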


\begin{proof}
 From \cite[Lemma 3.2]{TT} one has
 \[
  \frac{1}{\ep}\int_0^t\big(\mathcal{A}\mathcal{G}_\ep\,(s)+\mathcal{A}^2\mathcal{G}_\ep\,(s)\big)\ds=\frac{2}{\ep}\int_0^t\mathcal{G}_\ep(s)\ds+\frac{1}{\ep}\big(\mathcal{A}^2\mathcal{G}_\ep\,(t)-\mathcal{A}^2\mathcal{G}_\ep\,(0)+2\mathcal{A}\mathcal{G}_\ep\,(t)-2\mathcal{A}\mathcal{G}_\ep\,(0)\big),
 \]
 so that
 \[
   E_{\ep}(t)+\frac{2}{\ep}\int_0^t\mathcal{G}_\ep(s)\ds\leq E_\ep^d(t)+\frac{1}{\ep}\big(\mathcal{A}^2\mathcal{G}_\ep\,(0)+2\mathcal{A}\mathcal{G}_\ep\,(0)\big).
 \]
 Hence, combining the previous inequality with \eqref{stimalocT} and recalling that $\Lambda_\ep\geq E_\ep^d(0)$, one easily obtains \eqref{stimalocTbis}. Finally, \eqref{stimalocT2bis} follows arguing as in the last part of the proof of Proposition \ref{prop-appest}.
\end{proof}

\begin{remark}
 It is straightforward that also $E_{\ep}(t/\ep)+\frac{2}{\ep}\int_0^{t/\ep}\mathcal{G}_\ep(s)\ds$ is a formal approximation of the dissipative correction of the mechanical energy $\mathcal{E}^d$, as well as $E_\ep^d(t/\ep)$.
\end{remark}


\section{Proof of Theorem \ref{result}}

Before showing the proof we recall some facts. From \cite[Lemma 6.1]{TT}, for every function $f\in L_{\text{loc}}^2([0,\infty);L^2)$ there exists a sequence $(f_{\ep})\subset L_{\text{loc}}^2([0,\infty);L^2)$ satisfying the following properties:
 \begin{itemize}
  \item[(i)] as $\ep\downarrow0$, $f_{\ep}\to f$ in $L^2([0,T];L^2)$ and $\|f_{\ep}\|_{L^2([0,T];L^2)}\uparrow\|f\|_{L^2([0,T];L^2)}$, for every $T>0$;
  \item[(ii)] $\mathrm{supp}\{f_{\ep}\}\subset[t_{\ep},T_{\ep}]\times\R^n$, with $t_{\ep}>0$ and $T_{\ep}<\infty$;
  \item[(iii)] as $\ep\downarrow0$, $t_{\ep}\downarrow0$ and $T_{\ep}\uparrow\infty$, and moreover $\ep T_{\ep}\leq \sqrt{\ep}$ and $e^{-t_{\ep}/\ep}\left(1+\frac{T_{\ep}}{\ep}\right)\leq \ep^3$;
  \item[(iv)] for every $\ep\in(0,1)$, $ \quad\displaystyle   \int_{t_{\ep}}^{T_{\ep}}\|f_{\ep}(t)\|_{L^2}^2\dt\leq 1/\ep\quad$ and $ \quad\displaystyle\int_0^{\infty}e^{-t}\,\|f_{\ep}(\ep t)\|_{L^2}^2\dt \leq \ep^3$.
 \end{itemize}
In addition, from \cite[Corollary 6.2]{TT}, if $f\in L_{loc}^2([0,\infty);L^2)$ and $(f_\ep)$ is a sequence satisfying (i)--(iv), then for every fixed $\ep\in(0,1)$ the function
\begin{equation}
 \label{eq-assfi}
 \phi(t,x):=f_{\ep}(\ep t,x),\qquad t\geq0,\quad x\in\R^n,
\end{equation}
satisfies \eqref{suppcomt}, \eqref{eq-assogr} and \eqref{assAq}.

Now, we have all the ingredients to prove Theorem \ref{result}.

\begin{proof}[Proof of Theorem \ref{result}: item (a)]
 If we let $(f_\ep)$ be a sequence satisfying (i)--(iv) and set \eqref{eq-assfi} in \eqref{eq-funzjsor}, then assumptions \eqref{suppcomt}, \eqref{eq-assogr} and \eqref{assAq} and all the hypothesis of Proposition \ref{prop-mind} are fulfilled, so that we have a minimizer $u_{\ep}$, in the class of functions $u\in H_{\text{loc}}^2([0,\infty);L^2)$ subject to \eqref{eq-bc}, that satisfies \eqref{eq-levd}. Since
  \[
   F_{\ep}(w)=\ep J_{\ep}(u)\qquad\mbox{whenever}\qquad u(t,x)=w(\ep t,x),
  \]
  if we set
  \begin{equation}
  \label{eq-relmin}
  w_{\ep}(t,x):=u_{\ep}(t/\ep,x),\qquad t\geq0,\quad x\in\R^n,
 \end{equation}
  then $w_{\ep}$  is the required minimizer.
\end{proof}

\begin{proof}[Proof of Theorem \ref{result}: item (b)]
 We split the proof in three steps.

 \emph{Step (i): Euler-Lagrange equation of $u_\ep$.} Let $\eta(t,x):=\varphi(t)h(x)$, where $h\in\mathrm{W}\cap\mathrm{G}$, and $\varphi\in C^{1,1}([0,\infty))$ and satisfies $\varphi(0)=\varphi'(0)=0$. If we define $g(\delta):=J_{\ep}(u_{\ep}+\delta\eta)$ and observe that $g'(0)=0$, then we obtain
 \begin{align}
  \label{eq-elue}
  \frac{1}{\ep^2}\int_0^{\infty}e^{-t}\,\big(u_{\ep}''(t),\eta''(t)\big)_{L^2}\dt= & \, \int_0^{\infty}e^{-t}\,\left(-\left\langle\nabla\mathcal{W}\big(u_{\ep}(t)\big),\eta(t)\right\rangle+\big(f_{\ep}(\ep t),\eta(t)\big)_{L^2}\right)\dt+\nonumber\\[.3cm]
                                                                                   & \, -\frac{1}{\ep}\int_0^\infty e^{-t}\left\langle\nabla\mathcal{G}\big(u_{\ep}'(t)\big),\eta'(t)\right\rangle\dt
 \end{align}
 (exploiting the same arguments of \cite[Lemma 6.4]{TT} for the non dissipative terms and dominated convergence for the dissipative term). Moreover, the same relation holds if $\eta\in C_0^{\infty}(\R^+\times\R^n)$ by a classical density argument (see \cite[proof of Lemma 5.1]{ST2} for more details).

 \emph{Step (ii): representation formula for $u_{\ep}''$.} As in \cite[proof of Eqs. (2.11)$\&$(2.16)]{ST2} and \cite[proof of Lemma 6.5]{TT}, for every $\tau>0$ fixed, one can check that setting $\eta_k(t,x)=h(x)\phi_k(t)$, with $h\in\mathrm{W}\cap\mathrm{G}$ and $(\phi_k)$ a sequence of $C^{1,1}$ functions suitably approximating $(t-\tau)^+$, then \eqref{eq-elue} holds for every $\eta_k$ and passes to the limit (at every Lebesgue point $\tau$ of $\big(u_{\ep}''(\tau),h\big)_{L^2}$) yielding
 \begin{equation}
  \label{eq-rep}
  \frac{1}{\ep^2}\big(u_{\ep}''(\tau),h\big)_{L^2}=-\mathcal{A}^2\omega_1\,(\tau)+\mathcal{A}^2\omega_2\,(\tau)-\tfrac{1}{\ep}\mathcal{A}\omega_3\,(\tau),\qquad\mbox{for a.e.}\quad \tau>0,
 \end{equation}
 where $\omega_1(\tau):=\left\langle\nabla\mathcal{W}\big(u_{\ep}(\tau)\big),h\right\rangle$, $\omega_2(\tau):=\big(f_{\ep}(\ep \tau),h\big)_{L^2}$ and $\omega_3(\tau):=\left\langle\nabla\mathcal{G}\big(u_{\ep}'(\tau)\big),h\right\rangle$.

 \emph{Step (iii): proof of \eqref{eq-apruno}--\eqref{eq-aprtred}.} The proof of \eqref{eq-apruno} and \eqref{eq-aprdue} is simply the repetition of the arguments developed in the first part of \cite[Proof of Theorem 2.3: part (b)]{TT}, with $E_\ep(t)$ replaced by $E_\ep^d(t)$. Concerning \eqref{eq-aprdued}, this is an immediate consequence of \eqref{stimalocT2bis} in view of \eqref{eq-relmin} and \eqref{eq-defG}. Hence, it is left to show \eqref{eq-aprtred}. By \eqref{eq-rep}, $u_{\ep}''/\ep^2$ can be written as $\Psi_1+\Psi_2$, with
\[
 (\Psi_1(t),h)_{L^2}=-\mathcal{A}^2\omega_1\,(t)+\mathcal{A}^2\omega_2\,(t),\qquad\mbox{for a.e. }t>0,\quad\forall h\in\mathrm{W}\cap\mathrm{G},
\]
and
\[
 (\Psi_2(t),h)_{L^2}=-\tfrac{1}{\ep}\mathcal{A}\omega_3\,(t),\qquad\mbox{for a.e. }t>0,\quad\forall h\in\mathrm{W}\cap\mathrm{G}.
\]
Arguing as in the second part of \cite[Proof of Theorem 2.3: part (b)]{TT}, we find that for every $T>0$
\[
 \int_0^T\|\Psi_1(t/\ep)\|_{\mathrm{W}'}^2\dt\leq C_T.
\]
On the other hand, by \cite[Eq. (33)]{ST2} (note that in \cite{ST2} $\mathcal{G}$ and $\mathrm{G}$ are denoted by $\mathcal{H}$ and $\mathrm{H}$, respectively),
\[
 |\omega_3(t)|\leq C\|h\|_{\mathrm{G}}\sqrt{\mathcal{G}_{\ep}(t)}
\]
and then, with easy computations,
\[
 \|\Psi_2(t/\ep)\|_{\mathrm{G}'}\leq\frac{C}{\ep}\left(\mathcal{A}\mathcal{G}_{\ep}\,(t/\ep)\right)^{1/2}.
\]
Squaring the inequality and integrating over $[0,T]$, from \eqref{eq-dappenesp_comp} and \eqref{stimalocT2} we find that
\[
 \int_0^T\|\Psi_2(t/\ep)\|_{\mathrm{G}'}^2\dt\leq\frac{C}{\ep}\int_0^{T/\ep}\mathcal{A}\mathcal{G}_{\ep}\,(t)\dt\leq  C_T.
\]
Consequently, observing that $w_{\ep}''(t)=\Psi_1(t/\ep)+\Psi_2(t/\ep)$, we obtain \eqref{eq-aprtred}.
\end{proof}

Before showing the remaining part of the proof, we point out that, in the sequel, we deal with a sequence of minimizers $w_{\ep_i}$ and we will tacitly extract several subsequences. However, for ease of notation, we denote by $w_{\ep}$ the original sequence, as well as the subsequences we extract. The same holds for all the other quantities depending on $\ep$.

\begin{proof}[Proof of Theorem \ref{result}: item (c)]
 First we note that by \eqref{eq-apruno} and \eqref{eq-aprtred}
 \[
  \|w_{\ep}\|_{H^1([0,T];L^2)}\leq C_T,\qquad \|w_{\ep}'\|_{L^{\infty}([0,T];L^2)}\leq C_T,\qquad \|w_{\ep}'\|_{H^1([0,T];(\mathrm{W}\cap\mathrm{G})')}\leq C_T
 \]
 and, arguing as in \cite[proof of Theorem 2.5]{ST2}, one obtains convergence in $H^1([0,T];L^2)$, \eqref{eq-ic} (with the latter meant as an equality in $(\mathrm{W}\cap\mathrm{G})'$) and \eqref{eq-reg}. On the other hand, \eqref{eq-Gnor}, \eqref{eq-apruno} and \eqref{eq-aprdued} imply
 \[
  \int_0^T\|w_\ep'(t)\|_{\mathrm{G}}^2\dt\leq C_T,
 \]
 which proves \eqref{eq-regpd}, by a standard Banach-Alaoglu argument.
\end{proof}

\begin{proof}[Proof of Theorem \ref{result}: item (d)]
 From Corollary \ref{corl-appest}, for every $\beta>1$, there exists $C_\beta>0$ such that: letting $l(t):=\mathcal{W}_{\ep}(t)$ and
 \[
  m(t):=-K_{\ep}(t)-\frac{2}{\ep}\int_0^t+\left(\sqrt{\Lambda_\ep}+\big(\sqrt{\ep C_\beta}+\sqrt{\ep t\beta/2}\big)\sqrt{\gamma(\ep t+t_\ep)+\ep^2}\right)^2
 \]
 the assumptions of \cite[Lemma 6.1]{ST2} are satisfied, so that for every $a>0$, $\delta\in(0,1)$ and $T>0$,
 \begin{multline*}
  Y(\delta a)\int_{T+\delta a}^{T+a}\mathcal{W}_{\ep}(t)\dt+\int_T^{T+a}K_{\ep}(t)\dt+\frac{2}{\ep}\int_T^{T+a}\int_0^t\mathcal{G}_\ep(s)\ds\dt\leq\\[.2cm]
  \leq\int_T^{T+a}\left(\sqrt{\Lambda_\ep}+\big(\sqrt{\ep C_\beta}+\sqrt{\ep t\beta/2}\big)\sqrt{\gamma(\ep t+t_\ep)+\ep^2}\right)^2\dt,
 \end{multline*}
 where $Y(z):=\int_0^ze^{-s}\,s\ds$. Replacing $a$ with $a/\ep$ and $T$ with $T/\ep$, and recalling \eqref{eq-defG}, with a change of variable the previous inequality reads
 \begin{multline*}
  Y\left(\frac{\delta a}{\ep}\right)\int_{T+\delta a}^{T+a}\mathcal{W}\big(w_{\ep}(t)\big)\dt+\frac{1}{2}\int_T^{T+a}\|w_{\ep}'(t)\|_{L^2}^2(t)\dt+2\int_T^{T+a}\,\int_0^t\mathcal{G}(w_{\ep}'(s))\ds\dt\\[.2cm]
  \leq\int_T^{T+a}\left(\sqrt{\Lambda_\ep}+\big(\sqrt{\ep C_\beta}+\sqrt{t\beta/2}\big)\sqrt{\gamma(t+t_\ep)+\ep^2}\right)^2\dt.
 \end{multline*}
 Furthermore, as $\ep\downarrow0$, by definition $Y\left(\frac{\delta a}{\ep}\right)\to1$, whereas by \eqref{eq-appenzero} and \eqref{eq-gamma}
 \[
  \int_T^{T+a}\left(\sqrt{\Lambda_\ep}+\left(\sqrt{\ep C_\beta}+\sqrt{t\beta/2}\right)\sqrt{\gamma(t+t_\ep)+\ep^2}\right)\dt\to\int_T^{T+a}\left(\sqrt{\mathcal{E}^d(0)}+\sqrt{t\gamma(t)\beta/2}\right)\dt.
 \]
 Hence, arguing as in \cite[proof of Theorem 2.4]{ST2},
 \begin{multline*}
  \int_{T+\delta a}^{T+a}\mathcal{W}\big(w(t)\big)\dt+\frac{1}{2}\int_T^{T+a}\|w'(t)\|_{L^2}^2(t)\dt+2\int_T^{T+a}\,\int_0^t\mathcal{G}(w'(s))\ds\dt\leq\\[.2cm]
  \leq \int_T^{T+a}\left(\sqrt{\mathcal{E}^d(0)}+\sqrt{t\gamma(t)\beta/2}\right)^2\dt
 \end{multline*}
 and, letting $\delta\downarrow0$ and, subsequently, dividing by $a$ and letting $a\downarrow0$,  we obtain
 \[
  \mathcal{W}\big(w(T)\big)+\frac{1}{2}\|w'(T)\|_{L^2}^2+2\int_0^T\mathcal{G}(w'(t))\dt\leq \left(\sqrt{\mathcal{E}^d(0)}+\sqrt{T\gamma(T)\beta/2}\right)^2,\qquad\mbox{for a.e.}\quad T\geq0.
 \]
 Since the inequality is valid for every $\beta>1$, letting $\beta\downarrow1$, \eqref{eq-enineqd} follows.
\end{proof}

\begin{proof}[Proof of Theorem \ref{result}: item (e)]
 Arguing as in \cite[proof of Lemma 6.2]{ST2}, from \eqref{eq-elue} one obtains that for every function $\varphi\in C_0^{\infty}(\R^+\times\R^n)$
 \begin{multline}
 \label{eq-elwe}
  \int_0^{\infty}(w_{\ep}'(t),\ep^2\,\varphi'''(t)+2\ep\varphi''(t)+\varphi'(t))_{L^2}\dt=\int_0^{\infty}\langle\nabla\mathcal{W}(w_{\ep}(t)),\varphi(t)\rangle\dt+\\[.2cm]
  +\int_0^{\infty}\langle\nabla\mathcal{G}(w_{\ep}'(t)),\varphi(t)+\ep\varphi'(t)\rangle\dt-\int_0^{\infty}(f_{\ep}(t),\varphi(t))_{L^2}\dt.
\end{multline}
On the other hand, clearly
\[
  \int_0^{\infty}\big(w_{\ep}'(t),\ep^2\,\varphi'''(t)+2\ep\varphi''(t)+\varphi'(t)\big)_{L^2}\dt\to\int_0^{\infty}\big(w'(t),\varphi'(t)\big)_{L^2}\dt,
 \]
 whereas, by construction,
 \[
  \int_0^{\infty}\big(f_{\ep}(t),\varphi(t)\big)_{L^2}\dt\to\int_0^{\infty}\big(f(t),\varphi(t)\big)_{L^2}\dt.
 \]
 On the other hand, by \eqref{eq-apruno}, \eqref{eq-aprdue}, \eqref{eq-Wass} and \cite[Theorem 5.1]{lions2} (arguing as in \cite{ST2}),
 \[
  \int_0^{\infty}\left\langle\nabla\mathcal{W}\big(w_{\ep}(t)\big),\varphi(t)\right\rangle\dt\to\int_0^{\infty}\left\langle\nabla\mathcal{W}\big(w(t)\big),\varphi(t)\right\rangle\dt,
 \]
 while, by \cite[Eq. (33)]{ST2} and \eqref{eq-regpd},
 \[
 \int_0^{\infty}\langle\nabla\mathcal{G}(w_{\ep}'(t)),\varphi(t)+\ep\varphi'(t)\rangle\dt\to\int_0^{\infty}\langle\nabla\mathcal{G}(w'(t)),\varphi(t)\rangle\dt.
\]
Summing up, as $\ep\downarrow0$, equation \eqref{eq-elwe} tends to \eqref{eq-wavedfweak}, which concludes the proof.
\end{proof}


\end{document}